\providecommand{\U}[1]{\protect\rule{.1in}{.1in}}
\providecommand{\U}[1]{\protect\rule{.1in}{.1in}}
\providecommand{\U}[1]{\protect\rule{.1in}{.1in}}
\providecommand{\U}[1]{\protect\rule{.1in}{.1in}}
\providecommand{\U}[1]{\protect\rule{.1in}{.1in}}
\providecommand{\U}[1]{\protect\rule{.1in}{.1in}}
\newtheorem{theorem}{Theorem}[section]
\newtheorem{prop}[theorem]{Proposition}
\newtheorem{lemma}[theorem]{Lemma}
\newtheorem{corollary}[theorem]{Corollary}
\theoremstyle{definition}
\newtheorem{definition}[theorem]{Definition}
\newtheorem{example}[theorem]{Example}
\newtheorem{remark}[theorem]{Remark}
\numberwithin{equation}{section}
\subjclass[2010]{37C70, 28A80}
\keywords{iterated function system, attractor-repeller}
\begin{document}
\title[Attractors of iterated function systems]{The Conley Attractor of an Iterated Function System}
\author{Michael F. Barnsley }
\address{Department of Mathematics\\
Australian National University\\
Canberra, ACT, Australia}
\email{michael.barnsley@maths.anu.edu.au, mbarnsley@aol.com}
\urladdr{http://www.superfractals.com}
\author{Andrew Vince}
\address{Department of Mathematics\\
University of Florida\\
Gainesville, FL 32611-8105, USA}
\email{avince@ufl.edu}

\begin{abstract}
We investigate the topological and metric properties of attractors of an
iterated function system (IFS) whose functions may not be contractive. We
focus, in particular, on invertible IFSs of finitely many maps on a compact
metric space. We rely on ideas Kieninger and McGehee and Wiandt, restricted to
what is, in many ways, a simpler setting, but focused on a special type of
attractor, namely point-fibred minimal (locally) invariant sets. This allows
us to give short proofs of some of the key ideas. 

\end{abstract}
\maketitle

\section{Introduction}

The subject of this paper is the attractor or attractors of an iterated
function system (IFS) on a compact metric space. Iterated function systems are
used for the construction of deterministic fractals \cite{B1} and have found
numerous applications, in particular to image compression and image processing
\cite{B2}. The notion of an attactor of an IFS has historically been linked
with the the contractive properties of the functions in the IFS, beginning
with the work of Huchinson \cite{H}. If the functions in the IFS are
contractions, then the existence of an attractor, in a strong sense which we
call a \textit{strict attractor}, is assured. Moreover, it has recently been
shown \cite{ABVW,V} that, for affine and M\"{o}bius IFSs (defined in
Section~\ref{sec:attractor}), the existence of an attractor implies that the
functions in the IFS must be contractions. There do exists, however, examples
of IFS attractors for which the functions are not contractions with respect to
any metric that gives the same topology as the underlying space \cite{BV}. In
the current paper we investigate the topological and metric properties of
attractors of a general IFS on a compact metric space for which the functions
are not necessarily contractive. We rely on ideas in \cite{kieninger} and
\cite{mcgehee2}, but restricted to what is, in many ways, a simpler setting.

There are numerous definitions of an attractor; see \cite{milnor,milnor2} and
\cite{HH} for example. The notion of attractor as used in the paragraph above
is the one that has become standard in the fractal geometry literature. For
this paper the term \textit{strict attractor} is used for this type of
attractor, because it is natural in our setting to introduce a more general
notion of attractor. For this type of attractor the term \textit{Conley
attractor} is used because it is essentially an extension to a finite set of
functions of a notion used so successfully by Conley for a single function
\cite{C1}. Both the strict attractor and the Conley attractor are defined in
Section~\ref{sec:attractor}. The attractor block, an essential tool for our
investigation of Conley attractors, is the subject of Section~\ref{sec:block},
the main result being Theorem~\ref{thm:block} stating that every Conley
attractor possesses an attroctor block.

Section~\ref{sec:SC} gives a couple of sufficient conditions that guarantee
that a Conley attractor is a strict attractor. These sufficient conditions
involve both contractive properties of the functions in the IFS and the
existence of a natural addressing function for the points of the attractor.
The notion of ``fibering" plays a role; the thesis of Kieninger
\cite{kieninger} has an extensive discussion of the subject of fibering.

In the case that the functions in the IFS $\mathcal{F}$ are invertible, there
is a duality between the action of the IFS ${\mathcal{F}}$ and the IFS
${\mathcal{F}}^{\ast}$ consisting of the inverses of the maps in
${\mathcal{F}}$. This leads to the notion of an attractor-repller pair
$(A,A^{\ast})$ consisting of a Conley attractor $A$ of ${\mathcal{F}}$ and a
disjoint Conley attractor $A^{\ast}$ of ${\mathcal{F}}^{\ast}$, the main
result being Theorem~\ref{thm:ARpair} in Section~\ref{sec:AR}. For the
dynamics of a single function, this plays a significant role in Conley's index
theory \cite{C1} and has been extended to the context of \textquotedblleft
closed relations" by McGehee and Wiandt \cite{mcgehee1, mcgehee2}. In general,
an invertible IFS can have many Conley attractor-repeller pairs. The second
main result in Section~\ref{sec:AR} is Theorem~\ref{cmwthm}, which relates the
structure of these Conley attractor-repeller pairs to the dynamics of the IFS
$F$, more specifically to the set of chain-recurrent points of $F$.

The last section provides some examples of the properties described in the paper.

\section{Attractors}

\label{sec:attractor}

Unless otherwise stated, throughout this section $(\mathbb{X},d)$ is a
complete metric space. The closure of a set $B$ is denoted $\overline B$ and
the interior by $B^{o}$.

\begin{definition}
If $f_{n}:\mathbb{X}\rightarrow\mathbb{X}$, $n=1,2,\dots,N,$ are continuous
functions, then $\mathcal{F}=\left(  \mathbb{X};f_{1},f_{2},...,f_{N}\right)
$ is called an \textbf{iterated function system} (IFS). If each of the maps
$f\in\mathcal{F}$ is a homeomorphism then $\mathcal{F}$ is said to be
\textbf{invertible}, and the notation $\mathcal{F}^{*}:=\left(  \mathbb{X}%
;f_{1}^{-1},f_{2}^{-1},...,f_{N}^{-1}\right)  $ is used.

Subsequently in this paper we refer to some special cases of an IFS. For an
\textbf{affine IFS} we have $\mathbb{X }= {\mathbb{R}}^{n}$ and the functions
in the IFS are affine functions of the form $f(x) = Ax+a$, where $A$ is an
$n\times n$ matrix and $a \in{\mathbb{R}}^{n}$. For a \textbf{projective IFS}
we have $\mathbb{X }= \mathbb{RP}^{n}$, real projective space, and the
functions in the IFS are projective functions of the form $f(x) = Ax$, where
$A$ is an $(n+1)\times(n+1)$ matrix and $x$ is given by homogeneous
coordinates. For a \textbf{M\"obius IFS} we have $\mathbb{X }=
\widehat{\mathbb{C}}= \mathbb{C }\cup\{\infty\}$, the extended complex plane,
and the functions are M\"obius functions of the form $f(z) =\frac{az+b}{cd+d}%
$, where $ad=bc = 1$. M\"obius functions may equivalently be considered as
acting on the Riemann sphere or the complex projective line.
\end{definition}

By a slight abuse of terminology we use the same symbol $\mathcal{F}$ for the
IFS, the set of functions in the IFS, and for the following mapping. Letting
$2^{\mathbb{X}}$ denote the collection of subsets of $\mathbb{X}$, define
$\mathcal{F}:2^{\mathbb{X}}\mathbb{\rightarrow}2^{\mathbb{X}}$ by
\[
\mathcal{F}(B)=\bigcup_{f\in\mathcal{F}}f(B)
\]
for all $B\in2^{\mathbb{X}}$. Let $\mathbb{H=H(X)}$ be the set of nonempty
compact subsets of $\mathbb{X}$. Since $\mathcal{F}\left(  \mathbb{H}\right)
\subseteq\mathbb{H}$ we can also treat $\mathcal{F}$ as a mapping
$\mathcal{F}:\mathbb{H\rightarrow H}$. Let $d_{\mathbb{H}}$ denote the
Hausdorff metric on $\mathbb{H}$, which can be defined as follows. Using the
notation
\[
S_{r}=\{y\in\mathbb{X}\,:\,d_{\mathbb{X}}(x,y)<r\text{ for some }x\in S\}
\]
with $S\subset\mathbb{X}$ and $r>0,$ a convenient definition of the Hausdorff
metric $d_{\mathbb{H}}$ (see for example \cite[p.66]{edgar}) is
\[
d_{\mathbb{H}}(B,C)=\inf\{r>0:B\subset C_{r}\text{ and }C\subset B_{r}\}
\]
for all $B,C\in\mathbb{H}$. Under various conditions $\mathcal{F}%
:\mathbb{H\rightarrow H}$ is continuous with respect to $d_{\mathbb{H}}$. This
occurs for example when the metric space $\mathbb{X}$ is compact or when each
$f\in\mathcal{F}$ is Lipshitz, see \cite{barnvince1}. It was also proved to be
true when $\mathbb{X}$ is a complete metric space, see \cite{BarnLesCty}.

For $B\subset\mathbb{X}$ and $k\in\mathbb{N}:=\{1,2,...\}$, let $\mathcal{F}%
^{k}(B)$ denote the $k$-fold composition of $\mathcal{F}$, the union of
$f_{i_{1}}\circ f_{i_{2}}\circ\cdots\circ f_{i_{k}}(B)$ over all finite words
$i_{1}i_{2}\cdots i_{k}$ of length $k.$ Define $\mathcal{F}^{0}(B)=B.$ The
definition of an attractor that is fairly standard in the literature on
fractals is as follows.

\begin{definition}
\label{def:strictattractor} A non-empty set $A\in\mathbb{H(X)}$ is said to be
a \textbf{strict attractor} of the IFS $\mathcal{F}$ if

(i) $\mathcal{F}(A)=A$ and

(ii) there is an open set $U\subset\mathbb{X}$ such that $A\subset U$ and
$\lim_{k\rightarrow\infty}\mathcal{F}^{k}(S)=A$ for all $S\in\mathbb{H(}U)$,
where the limit is with respect to the Hausdorff metric.

The largest open set $U$ such that (ii) is true is called the \textbf{basin}
of the strict attractor $A$ of the IFS $\mathcal{F}$.
\end{definition}

The following less restrictive (see statement (4) of
Proposition~\ref{prop:attractor}) definition of an attractor $A$ is used in
this paper. Called a Conley attractor, it generalizes a notion of attractor
due to Conley \cite{C1} that has proved useful in the study of the dynamics of
a single function. The definition states, in the case of nonempty attractors,
that there is an open set $U$ containing $A$ whose closure converges, in the
Hausforff sense, to $A$.

\begin{definition}
\label{def:ConleyAttractor} A compact set $A$ is said to be a \textbf{Conley
attractor} of the IFS $\mathcal{F}$ if there is an open set $U$ such that
$A\subset U$ and
\[
A=\lim_{k\rightarrow\infty}\mathcal{F}^{k}({\overline{U}}).
\]
The \textbf{basin} of $A$ is the union of all open sets $U$ that satisfy the
above definition.
\end{definition}

The empty set is always a Conley attractor of an IFS and $\mathbb{X}$ is an
attractor if ${\mathcal{F}}$ contains at least one surjective function.
Example~\ref{ex:multiple} provides an IFS with infinitely many Conley attractors.

\begin{prop}
\label{prop:attractor} Let $\mathcal{F}$ be an IFS on a compact metric space.

\begin{enumerate}
\item If $A$ is a Conley attractor of $\mathcal{F}$, then ${\mathcal{F}}(A) =
A$.

\item If $A$ is a Conley attractor of ${\mathcal{F}}$ with basin $B$ and $S$
is any compact set such that $A \subseteq S \subset B$, then $\lim
_{k\rightarrow\infty}\mathcal{F}^{k}(S) = A.$

\item If $A$ and $A^{\prime}$ are Conley attractors of an IFS $\mathcal{F}$,
then $A\cup A^{\prime}$ and $A\cap A^{\prime}$ are also Conley attractors.

\item If $A$ is a strict attractor with basin $B$, then $A$ is a Conley
attractor with basin $B$.
\end{enumerate}
\end{prop}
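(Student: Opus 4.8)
The plan is to handle the four parts in order, leaning throughout on two structural facts about the induced map $\mathcal{F}:\mathbb{H}\to\mathbb{H}$ over the compact space: first, that $\mathcal{F}$ is continuous for $d_{\mathbb{H}}$ (as the text notes, because $\mathbb{X}$ is compact), and second, that each iterate $\mathcal{F}^{k}$ is monotone, $B\subseteq C\Rightarrow\mathcal{F}^{k}(B)\subseteq\mathcal{F}^{k}(C)$, and distributes over finite unions, $\mathcal{F}^{k}(B\cup C)=\mathcal{F}^{k}(B)\cup\mathcal{F}^{k}(C)$, the latter since $\mathcal{F}(B)=\bigcup_{f}f(B)$. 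Part (1) is then immediate by sliding $\mathcal{F}$ through the defining limit: if $A=\lim_{k\to\infty}\mathcal{F}^{k}(\overline{U})$, continuity gives $\mathcal{F}(A)=\lim_{k\to\infty}\mathcal{F}^{k+1}(\overline{U})=A$, since shifting the index does not change the limit. Iterating yields $\mathcal{F}^{k}(A)=A$ for all $k$, which I reuse below.

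For part (2), given compact $S$ with $A\subseteq S\subset B$, I would first trap $S$ inside a single Conley neighborhood. Since $S$ is compact and $B$ is a union of open sets $U_{\alpha}$ each satisfying the Conley condition, finitely many $U_{\alpha_{1}},\dots,U_{\alpha_{m}}$ cover $S$; set $U=\bigcup_{i}U_{\alpha_{i}}$. Using distributivity of $\mathcal{F}^{k}$ over unions, continuity of $(B,C)\mapsto B\cup C$ on $\mathbb{H}$, and $\overline{U}=\bigcup_{i}\overline{U_{\alpha_{i}}}$, I obtain $\mathcal{F}^{k}(\overline{U})\to A\cup\cdots\cup A=A$, so $U$ is itself a Conley neighborhood of $A$. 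Now $A\subseteq S\subseteq U\subseteq\overline{U}$, so monotonicity gives $A=\mathcal{F}^{k}(A)\subseteq\mathcal{F}^{k}(S)\subseteq\mathcal{F}^{k}(\overline{U})$, and a squeeze in $d_{\mathbb{H}}$ finishes: the lower Hausdorff bound is free because $A\subseteq\mathcal{F}^{k}(S)$, and the upper bound follows from $\mathcal{F}^{k}(S)\subseteq\mathcal{F}^{k}(\overline{U})\to A$.

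For the union in part (3) I would take Conley neighborhoods $U,U'$ of $A,A'$ and use $\mathcal{F}^{k}(\overline{U\cup U'})=\mathcal{F}^{k}(\overline{U})\cup\mathcal{F}^{k}(\overline{U'})\to A\cup A'$, so $U\cup U'$ exhibits $A\cup A'$ as a Conley attractor. The intersection is where I expect the real fight, because $\mathcal{F}^{k}$ does \emph{not} distribute over intersections and the intersection map is not $d_{\mathbb{H}}$-continuous. My plan is to replace $U,U'$ by trapping neighborhoods $V,V'$ (open sets with $\mathcal{F}(\overline{V})\subseteq V$ and $A=\bigcap_{k}\mathcal{F}^{k}(\overline{V})=\lim_{k}\mathcal{F}^{k}(\overline{V})$, so the sequence is now nested and decreasing), which can be built from the attraction $\mathcal{F}^{k}(\overline{U})\to A$ together with uniform continuity of the finitely many maps on $\mathbb{X}$. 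Then $V\cap V'$ is again trapping, $\mathcal{F}^{k}(\overline{V\cap V'})$ decreases to a compact invariant set $\widetilde{A}$, and monotonicity gives the easy inclusion $\widetilde{A}\subseteq A\cap A'$. The crux, and the step I expect to be hardest, is the reverse inclusion $A\cap A'\subseteq\widetilde{A}$, equivalently the invariance $\mathcal{F}(A\cap A')=A\cap A'$: sub-invariance $\mathcal{F}(A\cap A')\subseteq A\cap A'$ is automatic, but producing, for a point of $A\cap A'$, a single backward chain lying in $A$ and in $A'$ \emph{simultaneously} is precisely what the failure of distributivity over intersections obstructs, and this is the point that will require the most care.

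Finally, for part (4) I would pass from strict to Conley by shrinking: as $A$ is compact and $A\subseteq U$ with $U$ open, choose open $W$ with $A\subseteq W$ and $\overline{W}\subseteq U$; then $\overline{W}\in\mathbb{H}(U)$, so the strict condition gives $\mathcal{F}^{k}(\overline{W})\to A$, exhibiting $A$ as a Conley attractor. For equality of basins I would argue by double inclusion: given $x$ in the strict basin $B$, enclose $A\cup\{x\}$ in an open $V$ with $\overline{V}\subseteq B$ and apply the strict condition to $\overline{V}$ to see $V$ is a Conley neighborhood, placing $x$ in the Conley basin; the reverse inclusion I would extract from the maximality of $B$ together with the monotonicity and squeeze already developed in part (2).
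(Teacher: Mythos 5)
Your arguments for (1), (2), (4), and the union half of (3) are exactly the paper's: the same index shift for invariance, the same finite-subcover-plus-squeeze for (2), and the same double inclusion for (4) (shrink an open set inside the strict basin to get the Conley property, then compare basins by enclosing compact sets). The only divergence is the intersection in (3), where you have correctly isolated the difficulty but left it open. You should know that the paper does not close it either: its entire proof of the intersection case is the assertion that $U\cap U'$ ``is the required open set,'' which silently assumes the lower inclusion $A\cap A'\subseteq\lim_{k}\mathcal{F}^{k}(\overline{U\cap U'})$ --- precisely the step you flag as the crux.

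That step is not a technicality you can engineer around with better trapping neighborhoods; at this level of generality it fails. For a non-invertible IFS the equality $\mathcal{F}(A\cap A')=A\cap A'$ can be false, and then by part (1) the set $A\cap A'$ is not a Conley attractor at all. Concretely, let $\mathbb{X}=\{a,b,p,q\}$ with the discrete metric, and let $f_{1}(a)=f_{1}(b)=a$, $f_{1}(p)=b$, $f_{1}(q)=q$, while $f_{2}(a)=f_{2}(b)=a$, $f_{2}(p)=p$, $f_{2}(q)=b$. Then $A=\{a,b,p\}$ and $A'=\{a,b,q\}$ are clopen with $\mathcal{F}(A)=A$ and $\mathcal{F}(A')=A'$, so each is a Conley attractor (take $U=A$ and $U'=A'$ in the definition); but $\mathcal{F}(\{a,b\})=\{a\}\neq\{a,b\}$, so $A\cap A'$ is not invariant and hence not a Conley attractor. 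The failure is exactly the phenomenon you named: $b$ has a preimage in $A$ (namely $p$) and a preimage in $A'$ (namely $q$) but none in $A\cap A'$, so no common backward chain exists. The intersection statement therefore needs an extra hypothesis (for a single homeomorphism, for instance, $f(A)=A$ and $f(A')=A'$ do force $f(A\cap A')=A\cap A'$, which is the setting Conley's original lattice-of-attractors result lives in). Your instinct to treat the reverse inclusion as the real content, rather than wave it through, is the correct one; the honest conclusion is that neither your outline nor the paper's one-line proof establishes the intersection claim as stated.
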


\begin{proof}
Concerning statement (1), ${\mathcal{F}}(A) = {\mathcal{F}}(\lim
_{k\rightarrow\infty}{\mathcal{F}}^{k}({\overline U}) )= \lim_{k\rightarrow
\infty}{\mathcal{F}}^{k+1}({\overline U}) = A$.

Statement (2) follows from statement (1) and the fact that $S$, being compact,
is contrained in the union of finitely many open sets $U$ such that $A =
\lim_{k\rightarrow\infty}\mathcal{F}^{k}({\overline U}).$

Concerning statement (3), in Definition~\ref{def:ConleyAttractor}, let $U$ be
the open set for $A$ and $U^{\prime}$ the open set for $A^{\prime}$. Then
$U\cup U^{\prime}$ and $U\cap U^{\prime}$ are the required open sets for
$A\cup A^{\prime}$ and $A\cap A^{\prime}$.

Concerning statement (4), let $U$ be an open set containing $A$ such that
${\overline{U}}\subset B$. Then $U$ satisfies the conditions in the definition
of a Conley attractor. Let $B^{\prime}$ denote the basin of $A$ as given in
Definition~\ref{def:ConleyAttractor}. To show that $B^{\prime}=B$, first note
that $\bigcup\{U\,:\,A\subset U\subset{\overline{U}}\subset B\}=B$. Therefore
$B\subseteq B^{\prime}$. Moreover, if $S$ is any compact subset of $B^{\prime
}$, then there is an open set $U$ such that $A\cup S\subset U\subset
{\overline{U}}\subset B^{\prime}$. Since $\lim_{k\rightarrow\infty
}{\mathcal{F}}^{k}({\overline{U}})=A$, there is a $k$ such that ${\mathcal{F}%
}^{k}(S)\subset B$. It follows that $\lim_{k\rightarrow\infty}{\mathcal{F}%
}^{k}(S)=A$.
\end{proof}

\begin{remark}
Conley's concept of an attractor for one function is usually expressed as an
$\omega$-limit. Although it is slightly more complicated to do so, our
definition of Conley attractor could be defined in a similar manner. Let
$S\subset\mathbb{X}$. The $\omega$\textbf{-limit set} of the set $S$ under
$\mathcal{F}$ is
\[
\omega(S): = \bigcap\limits_{K\geq1}\overline{\bigcup_{k\geq K}\mathcal{F}%
^{k}(S)}\text{.}%
\]
Omitting the proof, we state that a set $A \subset\mathbb{X}$ is a Conley
attractor of the IFS $\mathcal{F}$ if and only if

(i) $A = {\omega}(U)$ for some open subset $U$ of $\mathbb{X}$, and

(ii) $A \subset U$.

\noindent Moreover, the largest open set $B\subset\mathbb{X}$ such that
${\omega}(\{x\}) \subset A$ for all $x \in B$ is the basin of $A$. It follows
from the equivalence of the two definitions that the Hausdorff limit in
Definition~\ref{def:ConleyAttractor} exists if and only if $\overline
{\bigcup_{k\geq K}\mathcal{F}^{k}(U)} \subset U$ for some $K$.
\end{remark}

In the following lemma we use the notation $\vec{d}(X,Y) = \max_{x\in X}
\min_{y \in Y} d(x,y)$ for compact sets $X$ and $Y$. The lemma states that the
basin of a Conley attractor $A$ consists of those points whose image under
iterates of ${\mathcal{F}}$ get arbitrarily close to $A$.

\begin{lemma}
\label{lem:basin} If $A$ is a Conley attractor of an IFS ${\mathcal{F}}$, then
the basin of $A$ is
\begin{equation}
\label{eq:basin}B = \left\{  x \, :\, \lim_{k \rightarrow\infty} \vec
{d}({\mathcal{F}}^{k}(x),A) = 0\right\}  = \left\{ x \, : \, \omega(\{x\})
\subset A\right\} .
\end{equation}

\end{lemma}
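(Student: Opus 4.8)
The plan is to prove the equality of the three sets in \eqref{eq:basin} by a cyclic chain of inclusions, writing $B$ for the basin as defined in Definition~\ref{def:ConleyAttractor}, $B_1 = \{x : \lim_{k\to\infty}\vec{d}(\mathcal{F}^k(x),A)=0\}$, and $B_2 = \{x : \omega(\{x\})\subset A\}$. The equivalence $B_1 = B_2$ should be essentially a reformulation: the condition $\lim_k \vec{d}(\mathcal{F}^k(x),A)=0$ says that every point of $\mathcal{F}^k(x)$ eventually lies arbitrarily close to $A$, which is exactly the statement that all limit points of the orbit lie in $A$, i.e. $\omega(\{x\})\subset A$. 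Here I would lean on compactness of $\mathbb{X}$ and of $A$, so that $\omega(\{x\})$ is a nonempty closed set and the one-sided Hausdorff distance $\vec{d}(\cdot,A)$ controls exactly the approach to $A$. I expect this step to be routine, reducing to the standard characterization of $\omega$-limit sets as the set of subsequential limits.

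Next I would show $B\subseteq B_1$. If $x\in B$, then by definition $x$ lies in some open set $U$ with $A = \lim_{k\to\infty}\mathcal{F}^k(\overline{U})$. Since $x\in U\subset\overline U$ we have $\mathcal{F}^k(x)\subset\mathcal{F}^k(\overline U)$ for every $k$, and convergence of $\mathcal{F}^k(\overline U)$ to $A$ in the full Hausdorff metric forces $\vec{d}(\mathcal{F}^k(x),A)\le d_{\mathbb{H}}(\mathcal{F}^k(\overline U),A)\to 0$. This direction is immediate from monotonicity of $\vec{d}$ under inclusion of the first argument.

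The substantive direction is $B_2\subseteq B$ (equivalently $B_1\subseteq B$), and this is where I expect the main obstacle to lie: from the purely orbital condition $\omega(\{x\})\subset A$ I must produce a single open set $U$ containing $x$ whose closure is swept into $A$ under iteration. The natural tool is Theorem~\ref{thm:block}, the existence of an attractor block: a compact (or open) neighborhood of $A$ that is mapped into its own interior by $\mathcal{F}$, so that its forward iterates shrink to $A$. Given such a block with interior $W\supset A$, the hypothesis $\omega(\{x\})\subset A\subset W$ together with compactness of $\mathbb{X}$ should force $\mathcal{F}^k(\{x\})\subset W$ for all sufficiently large $k$; the delicate point is that $\mathcal{F}^k(x)$ is a finite set of points whose cardinality grows, so I must argue uniformly that eventually \emph{every} branch of the orbit enters $W$, using that the tails $\overline{\bigcup_{k\ge K}\mathcal{F}^k(\{x\})}$ decrease to $\omega(\{x\})$ and hence are eventually contained in $W$. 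Once some iterate $\mathcal{F}^{K}(\{x\})$ lies in the attractor block, I can pull back to an open set: choose an open $U\ni x$ small enough that $\mathcal{F}^{K}(\overline U)$ still lies in the block (using continuity of $\mathcal{F}$ on $\mathbb{H}$), whence $\lim_{k}\mathcal{F}^k(\overline U)=A$ and $x\in B$.

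The overall shape is therefore $B\subseteq B_1 = B_2\subseteq B$, closing the loop and establishing all three equalities simultaneously. The only genuinely nontrivial ingredient is the passage from the orbital $\omega$-limit condition to membership in an open set of the Conley basin, and for that I would invoke the attractor block of Theorem~\ref{thm:block} as the bridge, taking care that the uniform entry of the entire (growing) orbit $\mathcal{F}^k(\{x\})$ into the block is justified by the monotone convergence of the closed tails to $\omega(\{x\})$ inside the compact space $\mathbb{X}$.
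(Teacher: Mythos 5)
Your argument is correct in substance, but it routes the essential direction differently from the paper, and one step needs repair before it closes. The paper's proof of $B'\subseteq B$ is elementary and self-contained: since $\vec{d}(\mathcal{F}^{K}(x),A)<\epsilon/2$ for some $K$, where the $\epsilon$-collar of $A$ lies in the given $U$, uniform continuity of the finitely many maps in $\mathcal{F}^{K}$ yields a neighborhood $N$ of $x$ with $\mathcal{F}^{K}(N)\subset U$, and the witness for $x\in B$ is $U'=U\cup N$. You instead work from the $\omega$-limit description, invoke Theorem~\ref{thm:block} to get an attractor block $Q$, and use the monotone convergence of the closed orbit tails $\overline{\bigcup_{k\ge K}\mathcal{F}^{k}(\{x\})}$ to $\omega(\{x\})$ to force $\mathcal{F}^{K}(\{x\})\subset Q^{\circ}$. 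That tails argument is sound and is a nice way to handle the growing finite sets $\mathcal{F}^{k}(x)$; the paper avoids it by working with $\vec{d}$ directly. Two caveats. First, Theorem~\ref{thm:block} appears after this lemma in the paper, so using it here requires checking (as one can) that its proof does not depend on Lemma~\ref{lem:basin}; it is a heavier tool than the lemma needs. Second, and more importantly, your final step as written is wrong: for a small open $U\ni x$ with $\mathcal{F}^{K}(\overline{U})$ inside the block, the claim $\lim_{k}\mathcal{F}^{k}(\overline{U})=A$ can fail (the iterates may collapse onto a proper subset of $A$), and in any case Definition~\ref{def:ConleyAttractor} requires the witnessing open set to contain $A$. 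The fix is exactly the paper's move: take $U'=U\cup Q^{\circ}$, so that $A\subset U'$, $\mathcal{F}^{K}(\overline{U'})\subset Q^{\circ}$, and hence $\mathcal{F}^{k}(\overline{U'})$ is squeezed between $A=\mathcal{F}^{k}(A)$ and $\mathcal{F}^{k-K}(\overline{Q})\to A$. With that one-line repair, your chain $B\subseteq B_{1}=B_{2}\subseteq B$ goes through; note also that your explicit verification of $B_{1}=B_{2}$ is a detail the paper silently omits.
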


\begin{proof}
Let $B$ denote the basin for $A$ and $B^{\prime}$ the set in
Equation~\ref{eq:basin}. It follows from the definitions that $B \subseteq
B^{\prime}$.

Let $U$ be an open set containing $A$ such that $\lim_{k \rightarrow\infty}
{\mathcal{F}}^{k}({\overline U}) = A$. To prove that $B^{\prime}\subseteq B$,
it suffices to show that, for any $x \in B^{\prime}$, there is an open
neighborhood $N$ of $x$ such that, if $U^{\prime}= U\cup N$, then $\lim_{k
\rightarrow\infty} {\mathcal{F}}^{k}({\overline U^{\prime}}) = A$, and hence
$x\in B$. To show that such a neighborhood $N$ exists, let $\epsilon> 0$ be
such that $\{ x \, : \, \min_{a\in A} d(x,y) < \epsilon\} \subset U$. Then
there is a $K$ such that if $k\geq K$, then $\lim_{k \rightarrow\infty}
\vec{d}({\mathcal{F}}^{k}(x),A) < \epsilon/2$. By the continuity of the
functions in ${\mathcal{F}}$, there is a $\delta>0$ such that, if $d(x,y) <
\delta$, then $d(g(x), g(y) ) < \epsilon/2$ for all $g\in{\mathcal{F}}^{K}$.
Therefore ${\mathcal{F}}^{K}(N) \subset U$ and $\lim_{k \rightarrow\infty}
{\mathcal{F}}^{k}({\overline U^{\prime}}) = \lim_{k \rightarrow\infty}
{\mathcal{F}}^{k}({\overline U}) = A$.
\end{proof}

\section{Attractor Blocks}

\label{sec:block}

\begin{definition}
If $\mathcal{F}$ is an IFS on a compact metric space $\mathbb{X}$, then
$Q\subset\mathbb{X}$ is called an \textbf{attractor block} with respect to
$\mathcal{F}$ if $\mathcal{F}\left(  \overline{Q}\right)  \subset Q^{\circ}$.
\end{definition}

The following proposition is easy to verify.

\begin{prop}
\label{prop:block} If $Q$ is an attractor block with respect to the IFS
$\mathcal{F}$ on a compact metric space $\mathbb{X}$, then $\lim
_{k\rightarrow\infty}\mathcal{F}^{k}({\overline Q})= \bigcap_{k\rightarrow
\infty}\mathcal{F}^{k}({\overline Q})$ exists and is a Conley attractor of
$\mathcal{F}$.
\end{prop}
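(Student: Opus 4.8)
The plan is to exploit the attractor block condition $\mathcal{F}(\overline{Q}) \subset Q^\circ$ to show that the sets $\mathcal{F}^k(\overline{Q})$ form a nested decreasing sequence of nonempty compact sets, and then to identify the intersection with the Hausdorff limit and verify it is a Conley attractor.

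First I would establish the nesting. Since $\mathcal{F}(\overline{Q}) \subset Q^\circ \subset \overline{Q}$, applying $\mathcal{F}$ repeatedly and using monotonicity of $\mathcal{F}$ (if $S \subseteq T$ then $\mathcal{F}(S) \subseteq \mathcal{F}(T)$, which is immediate from the definition $\mathcal{F}(B) = \bigcup_{f} f(B)$) gives $\mathcal{F}^{k+1}(\overline{Q}) \subseteq \mathcal{F}^k(\overline{Q})$ for all $k$. Each $\mathcal{F}^k(\overline{Q})$ is a nonempty compact set because $\overline{Q}$ is compact, the functions are continuous, and finite unions of compact sets are compact. Thus we have a decreasing sequence in $\mathbb{H}(\mathbb{X})$, and I would set $A := \bigcap_{k \geq 0} \mathcal{F}^k(\overline{Q})$, which is nonempty and compact by the finite intersection property on the compact space $\mathbb{X}$.

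Next I would show that this intersection equals the Hausdorff limit. For a nested decreasing sequence of nonempty compact sets, the intersection is exactly the Hausdorff limit: since $A \subseteq \mathcal{F}^k(\overline{Q})$ for every $k$, one direction of the Hausdorff distance bound is automatic, and for the other direction I would argue that for any $\epsilon > 0$, the open set $A_\epsilon$ (the $\epsilon$-neighborhood of $A$) eventually contains $\mathcal{F}^k(\overline{Q})$. This follows because if no $\mathcal{F}^k(\overline{Q})$ were contained in $A_\epsilon$, then the sets $\mathcal{F}^k(\overline{Q}) \setminus A_\epsilon$ would be a decreasing sequence of nonempty compact sets with empty intersection, contradicting the finite intersection property. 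Hence $d_{\mathbb{H}}(\mathcal{F}^k(\overline{Q}), A) \to 0$, so $\lim_{k \to \infty} \mathcal{F}^k(\overline{Q}) = A$.

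Finally I would verify that $A$ is a Conley attractor by producing the required open set $U$ in Definition~\ref{def:ConleyAttractor}. The natural choice is $U = Q^\circ$: we have $A \subseteq Q^\circ$ since $A = \mathcal{F}(A) \subseteq \mathcal{F}(\overline{Q}) \subset Q^\circ$ (using that $A$ is invariant, which itself follows from $A = \lim \mathcal{F}^k(\overline{Q})$ together with continuity of $\mathcal{F}$ on $\mathbb{H}$, exactly as in statement (1) of Proposition~\ref{prop:attractor}). It remains to check $\lim_{k \to \infty} \mathcal{F}^k(\overline{U}) = A$; since $\overline{U} = \overline{Q^\circ} \subseteq \overline{Q}$, monotonicity gives $\mathcal{F}^k(\overline{U}) \subseteq \mathcal{F}^k(\overline{Q})$, so these sets are squeezed between $A$ and $\mathcal{F}^k(\overline{Q})$ and hence also converge to $A$. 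The main obstacle I anticipate is the careful justification that the decreasing intersection coincides with the Hausdorff limit — establishing the nonobvious direction of the Hausdorff bound via the finite intersection property is where the compactness of $\mathbb{X}$ is genuinely used, and this is the one step that requires more than formal manipulation.
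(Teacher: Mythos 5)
Your proof is correct. The paper offers no argument for this proposition (it is dismissed as ``easy to verify''), and your argument --- deducing the nesting $\mathcal{F}^{k+1}(\overline Q)\subseteq\mathcal{F}^{k}(\overline Q)$ from the block condition, identifying the decreasing intersection with the Hausdorff limit via the finite intersection property, and exhibiting $U=Q^{\circ}$ as the open set required by Definition~\ref{def:ConleyAttractor} --- is exactly the standard verification the authors intend (the only pedantic caveat is that the sets $\mathcal{F}^{k}(\overline Q)$ are nonempty only when $Q$ is, but the case $Q=\emptyset$ is trivial since the empty set is a Conley attractor).
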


In light of Proposition~\ref{prop:block} we formulate the following definition.

\begin{definition}
If $Q$ is an attractor block and
\[
A = \bigcap_{k\rightarrow\infty}\mathcal{F}^{k}({\overline Q})
\]
is the corresponding Conley attractor, then $Q$ is called an \textbf{attractor
block for} $\mathbf{A}$ with respect to $\mathcal{F}$.
\end{definition}

The basin of a Conley attractor is not, in general, an attractor block. For,
if $B$ is the basin for a Conley attractor then, using the continuity of
$\mathcal{F}$, we have $\mathcal{F}\left(  \overline{B}\right)  =\overline{B}%
$. Therefore, unless $\overline{B}$ is open, $\mathcal{F}\left(  \overline
{B}\right)  $ is not contained in the interior of $B$, and so it cannot be an
attractor block. Nevertheless, the following theorem tells us that every
Conley attractor has a corresponding attractor block for it.

\begin{theorem}
\label{thm:block} If $\mathcal{F}$ is an IFS on a compact metric space, $A$ is
a Conley attractor of $\mathcal{F}$, and $\mathcal{N}$ is $^{{}}$a
neighborhood of $A$, then there is an attractor block for $A$ contained in
$\mathcal{N}$.
\end{theorem}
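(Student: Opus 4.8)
The plan is to construct the attractor block explicitly as a sublevel set of the distance-to-$A$ function, intersected with $\mathcal{N}$, and then shrink the level to force the defining inclusion $\mathcal{F}(\overline Q)\subset Q^{\circ}$. The starting observation is that, by Definition~\ref{def:ConleyAttractor}, there is an open set $U$ with $A\subset U$ and $\lim_{k\to\infty}\mathcal{F}^k(\overline U)=A$. Shrinking $U$ if necessary (replacing $U$ by $U\cap\mathcal{N}^{\circ}$, which still lies in the basin by Lemma~\ref{lem:basin}), I may assume $\overline U\subset\mathcal{N}$. The Hausdorff convergence then gives, for every $\varepsilon>0$, a stage $K$ with $\mathcal{F}^K(\overline U)\subset A_{\varepsilon}$, where I write $A_\varepsilon=\{x:\min_{a\in A}d(x,a)<\varepsilon\}$ for the open $\varepsilon$-neighborhood of $A$.

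The candidate block will be
\[
Q=\bigcup_{k=0}^{K}\mathcal{F}^k\bigl(A_{\varepsilon}\bigr),
\]
a finite union of open sets, hence open, containing $A$ (since $\mathcal{F}(A)=A$ by Proposition~\ref{prop:attractor}(1), so $A\subset\mathcal{F}^k(A_\varepsilon)$ for each $k$). To verify $\mathcal{F}(\overline Q)\subset Q^{\circ}=Q$, I would compute
\[
\mathcal{F}(\overline Q)\subset\overline{\mathcal{F}(\overline Q)}
=\bigcup_{k=0}^{K}\overline{\mathcal{F}^{k+1}(A_\varepsilon)}
=\bigcup_{k=1}^{K}\overline{\mathcal{F}^{k}(A_\varepsilon)}\;\cup\;\overline{\mathcal{F}^{K+1}(A_\varepsilon)}.
\]
The first $K-1$ terms are absorbed into $Q$ after one checks the closures sit inside the open union; the final term is the crucial one, and it is controlled precisely because $\mathcal{F}^{K}(A_\varepsilon)$ has already entered $A_\varepsilon\subset Q$, so applying $\mathcal{F}$ once more keeps it inside $\mathcal{F}(A_\varepsilon)\subset Q$. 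Making these nesting inclusions strict (so that the closure, not just the set, is contained) is where I would spend care: I would choose two radii $\varepsilon_1<\varepsilon_2$, build $Q$ from $A_{\varepsilon_2}$ but demand $\mathcal{F}^K(\overline{A_{\varepsilon_2}})\subset A_{\varepsilon_1}$, using compactness of $A$ and uniform continuity of the finitely many composed maps on the compact space $\mathbb{X}$ to pass from the open-neighborhood statement to its closed-neighborhood counterpart.

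The main obstacle, and the reason a naive one-shot $Q=A_\varepsilon$ fails, is that a single neighborhood need not satisfy $\mathcal{F}(\overline{A_\varepsilon})\subset A_\varepsilon$: the maps are not assumed contractive, so they may push points of $A_\varepsilon$ temporarily \emph{away} from $A$ before the long-run contraction in the Hausdorff sense takes over. The finite-union construction is exactly the device that tolerates this transient expansion by recording the first $K$ iterates as a ``trapping tube.'' The delicate point is therefore the boundary estimate at the top stage: I must ensure that the extra iterate $\mathcal{F}^{K+1}$ lands strictly inside the union rather than merely in its closure. This is handled by the uniform-continuity margin between $\varepsilon_1$ and $\varepsilon_2$ introduced above, together with the containment $\overline U\subset\mathcal{N}$ that keeps the whole tube inside the prescribed neighborhood, since each $\mathcal{F}^k$ maps $\mathcal{N}$-sized sets to sets that remain within $\mathcal{N}$ once $\varepsilon$ is taken small enough. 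Once the strict inclusion $\mathcal{F}(\overline Q)\subset Q^{\circ}$ is in hand, Proposition~\ref{prop:block} identifies $\bigcap_k\mathcal{F}^k(\overline Q)$ as a Conley attractor; a short check that this intersection equals $A$ (both are $\lim_k\mathcal{F}^k$ of sets squeezed between $A$ and $\overline U$, invoking Proposition~\ref{prop:attractor}(2)) completes the argument.
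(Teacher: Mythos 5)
Your overall architecture---a ``trapping tube'' $Q=\bigcup_{k=0}^{K}\mathcal{F}^{k}(\cdot)$ built from the first $K$ iterates of a small neighborhood of $A$---is the same as the paper's, but the two places you flag as ``needing care'' are exactly where the proof lives, and your proposed fixes do not close them. First, the sets $\mathcal{F}^{k}(A_{\varepsilon})$ need not be open: the maps in $\mathcal{F}$ are only assumed continuous, not open (the theorem is not restricted to invertible IFSs), so a continuous image of an open set can fail to be open. Hence your $Q$ is not ``a finite union of open sets, hence open,'' $Q^{\circ}$ may be strictly smaller than $Q$, and even a successful verification of $\mathcal{F}(\overline{Q})\subset Q$ would not give $\mathcal{F}(\overline{Q})\subset Q^{\circ}$. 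The paper avoids this by never using the images themselves: it chooses open sets $O_{k}\supset\mathcal{F}^{k}(\overline{O})$ one at a time, going backwards from $k=K$ to $k=0$ via the open preimage operator $\mathcal{F}^{-1}(X)=\{x:f(x)\in X\ \text{for all}\ f\in\mathcal{F}\}$, so that $\mathcal{F}(\overline{O_{k}})\subset O_{k+1}$ holds by construction and the union $\bigcup_{k=0}^{K-1}O_{k}$ is genuinely open.

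Second, and more seriously, your containment of the tube in $\mathcal{N}$ is circular. You need $K$ large enough that $\mathcal{F}^{K}(\overline{A_{\varepsilon_{2}}})\subset A_{\varepsilon_{1}}$ (a return into the base of the tube), and you need $\varepsilon_{2}$ small enough that $\mathcal{F}^{k}(A_{\varepsilon_{2}})\subset\mathcal{N}$ for all $k\le K+1$. But the required $K$ grows as $\varepsilon_{1},\varepsilon_{2}\to 0$ (the maps are not contractive, so the time to re-enter a small ball is unbounded), while the admissible $\varepsilon_{2}$ shrinks as $K$ grows. Your assertion that ``each $\mathcal{F}^{k}$ maps $\mathcal{N}$-sized sets to sets that remain within $\mathcal{N}$ once $\varepsilon$ is taken small enough,'' taken uniformly in $k$, is precisely the Lyapunov stability of $A$, which is essentially equivalent to the existence of attractor blocks---you cannot assume it. The paper breaks the circle with a preliminary step you are missing: it first manufactures an open $O\supset A$ whose \emph{entire} forward orbit stays inside a fixed $V$ with $\overline{V}\subset U\subset\mathcal{N}$, by setting $V_{m}=V$ and $V_{k}=V\cap\mathcal{F}^{-1}(V_{k+1})$ for $k=m-1,\dots,0$, where $m$ is the time (determined by $V$ alone, independent of any later choices) after which $\overline{\mathcal{F}^{k}(V)}\subset V$ automatically; only finitely many pullbacks are needed because of the attractor property. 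Once the whole orbit of $O$ is confined, the choice of the return time $K$ no longer interacts with the choice of neighborhood. Without some version of this step your argument does not close.
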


\begin{proof}
The proof will make use of the function $\mathcal{F}^{-1}(X) = \{ x
\in\mathbb{X }\, : \, f(x) \in X \; \text{for all}\; f \in{\mathcal{F}}\}$.
Note that ${\mathcal{F}}^{-1}$ takes open sets to open sets, $X \subset
({\mathcal{F}}^{-1}\circ{\mathcal{F}})(X)$ and $({\mathcal{F}}\circ
{\mathcal{F}}^{-1}) (X) \subset X$ for all $X$.

Let $U^{\prime}$ denote an open set containing $A$ such that $A =
\lim_{k\rightarrow\infty}\mathcal{F}^{k}(\overline{U^{\prime}})$, and let $U =
U^{\prime}\cap\mathcal{N}$. Let $V$ be an open set such that $A \subset V$ and
$\overline V \subset U$. Since $A = \lim_{k\rightarrow\infty}\mathcal{F}%
^{k}(\overline{V})$ by statement (2) of Proposition~\ref{prop:attractor},
there is an integer $m$ such that $\overline{{\mathcal{F}}^{k}(V)} \subset V$
for all $k > m$. Define $V_{k},\, k = 0,1,\dots, m,$ recursively, going
backwards from $V_{m}$ to $V_{0}$, as follows. Let $V_{m} = V$ and for $k =
m-1, \dots, 2,1,0,$ let $V_{k} = V \cap{\mathcal{F}}^{-1} (V_{k+1})$. If $O =
V_{0}$, then $O$ has the following properties:

\begin{enumerate}
\item $O$ is open,

\item $A \subset O$,

\item ${\mathcal{F}}^{k} (O) \subset V$ for all $k\geq0$.
\end{enumerate}

Property (2) follows from the fact that$A \subset{\mathcal{F}}^{-1}(A)$.
Property (3) follows from the facts that ${\mathcal{F}}^{k}(O) \subset V_{k}
\subset V$ for $0\leq k \leq m$, and ${\mathcal{F}}^{k}(O) \subset
{\mathcal{F}}^{k}(V) \subset V$ for all $k > m$.

Since, by statement (2) of Proposition~\ref{prop:attractor}, $A =
\lim_{k\rightarrow\infty}\mathcal{F}^{k}(\overline{O})$, there is an integer
$K$ such that ${\mathcal{F}}^{K}(\overline O) = \overline{{\mathcal{F}}%
^{K}(O)} \subset O$. Let $O_{k}, \, k = 0,1, \dots, K,$ be defined
recursively, going backwards from $O_{K}$ to $O_{0}$, as follows. Let $O_{K}$
be an open set such that ${\mathcal{F}}^{K}(\overline O) \subset O_{K} \subset
O$, and for $k=K-1, \dots, 2,1,0,$ let $O_{k}$ be an open set such that

\begin{enumerate}
\item ${\mathcal{F}}^{k} (\overline O) \subset O_{k} \subset U$, and

\item ${\mathcal{F}}(\overline O_{k}) \subset O_{k+1}$.
\end{enumerate}

To verify that a set $O_{k}$ with these properties exists, assume that $O_{k},
\, k \geq1,$ has been chosen with properties (1) and (2) and note that
${\mathcal{F}}^{k-1}(\overline O) \subset{\mathcal{F}}^{-1}({\mathcal{F}}%
^{k}(\overline O)) \subset{\mathcal{F}}^{-1}(O_{k})$ and ${\mathcal{F}}%
^{k-1}(\overline O) \subset\overline V \subset U$. Now choose $O_{k-1}$ to be
an open set such that ${\mathcal{F}}^{k-1}(\overline O) \subset O_{k-1}$ and
$\overline{O_{k-1}} \subset U \cap{\mathcal{F}}^{-1}(O_{k}) \subset U$. The
last inclusion implies ${\mathcal{F}}(\overline O_{k-1}) \subset O_{k}$.

We claim that
\[
Q = \bigcup_{k=0}^{K-1} O_{k}%
\]
is an attractor block for $A$. Since $A = {\mathcal{F}}^{k}(A) \subset
{\mathcal{F}}^{k} (O) \subset O_{k}$ for each $k$, we have $A \subset Q$.
Clearly $Q$ is an open set such that $Q \subset U \subset\mathcal{N}$. Hence
$A = \lim_{k\rightarrow\infty}\mathcal{F}^{k}(\overline{Q})$, and lastly,
\[
{\mathcal{F}}(\overline Q) = \bigcup_{k=0}^{K-1} {\mathcal{F}}(\overline
{O_{k}}) \subset\bigcup_{k=1}^{K} O_{k} = \bigcup_{k=1}^{K-1} O_{k} \cup O_{K}
\subset Q \cup O \subset Q \cup O_{0} \subset Q.
\]

\end{proof}

\section{Sufficient Conditions for a Conley Attractor to be a Stict Attractor}

\label{sec:SC}

In this section $A$ is a Conley attractor of a IFS ${\mathcal{F}}$ on a
compact metric space and $B$ is the basin of $A$. Under certain conditions $A$
is guaranteed to be a strict attractor. In particular, contractive properties
of the functions in ${\mathcal{F}}$ or of the \textquotedblleft fibers" of
${\mathcal{F}}$ may force this.

\begin{definition}
An IFS $\mathcal{F}$ on a metric space $(\mathbb{X},d)$ is said to be
\textbf{contractive} if there is a metric $\hat{d}$ inducing the same topology
on $\mathbb{X}$ as the metric $d$ with respect to which the functions in
$\mathcal{F}$ are strict contractions, i.e., there exists $\lambda\in
\lbrack0,1)$ such that $\hat{d}_{\mathbb{X}}(f(x),f(y))\leq\lambda\hat
{d}_{\mathbb{X}}(x,y)$ forall $x,y\in\mathbb{X}$ and for all $f\in\mathcal{F}$.
\end{definition}

A classical result of Hutchinson \cite{H}, a result marking the origin of the
concept of an iterated function system, states that if $\mathcal{F}$ is
contractive on a complete metric space $\mathbb{X}$, then $\mathcal{F}$ has a
unique strict attractor with basin $\mathbb{X}$. The corollary below follows
from Hutchinson's result.

\begin{corollary}
\label{cor:contractive} Let $A$ be a Conley attractor of an IFS ${\mathcal{F}%
}$ on a metric space and let $B$ be the basin of $A$. If $\mathcal{F}$ is
contractive on $B$, then $A$ is a strict attractor of ${\mathcal{F}}$ with
basin $B$.
\end{corollary}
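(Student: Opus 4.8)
The plan is to reduce the corollary to Hutchinson's theorem applied on a suitable invariant subspace, exploiting the fact that a Conley attractor already satisfies $\mathcal{F}(A)=A$ (statement (1) of Proposition~\ref{prop:attractor}) and that its basin $B$ carries the Hausdorff-limit convergence guaranteed by statement (2). The central issue is that Hutchinson's result, as quoted, requires a \emph{complete} metric space, whereas $B$ is merely an open subset of a compact space and need not be complete in the inherited metric $\hat d$. So the first thing I would do is arrange a complete space on which $\mathcal{F}$ is contractive.

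First I would observe that, by Theorem~\ref{thm:block}, there is an attractor block $Q$ for $A$ with $\overline Q\subset B$, and in fact $\mathcal{F}(\overline Q)\subset Q^{\circ}\subset\overline Q$. The set $\overline Q$ is a nonempty \emph{compact} subset of $B$ satisfying $\mathcal{F}(\overline Q)\subset\overline Q$, hence $(\overline Q,\hat d)$ is a complete metric space on which every $f\in\mathcal{F}$ restricts to a self-map. Since $\mathcal{F}$ is contractive on $B$ with contraction factor $\lambda\in[0,1)$ in the metric $\hat d$, the restricted IFS $\mathcal{F}|_{\overline Q}$ is a contractive IFS on the complete space $\overline Q$. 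Hutchinson's theorem then yields a unique strict attractor $A'\in\mathbb{H}(\overline Q)$ of $\mathcal{F}|_{\overline Q}$ with basin all of $\overline Q$, meaning $\lim_{k\to\infty}\mathcal{F}^{k}(S)=A'$ for every $S\in\mathbb{H}(\overline Q)$.

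Next I would identify $A'$ with $A$. Since $A\subset Q\subset\overline Q$ and $A\in\mathbb{H}$, applying the convergence just obtained to $S=A$ gives $\lim_{k\to\infty}\mathcal{F}^{k}(A)=A'$; but $\mathcal{F}(A)=A$, so $\mathcal{F}^{k}(A)=A$ for all $k$ and the limit is $A$ itself, whence $A=A'$. Thus $A$ is the unique strict attractor of $\mathcal{F}|_{\overline Q}$ and in particular $\lim_{k\to\infty}\mathcal{F}^{k}(S)=A$ for every compact $S\subset\overline Q$. It remains to promote this to the full basin $B$ and to verify the basin of the strict attractor equals $B$.

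For the promotion step I would take an arbitrary $S\in\mathbb{H}(B)$ and use statement (2) of Proposition~\ref{prop:attractor}: since $A\subseteq A\cup S\subset B$ and $A\cup S$ is compact, $\lim_{k\to\infty}\mathcal{F}^{k}(A\cup S)=A$, hence eventually $\mathcal{F}^{k}(S)\subset Q\subset\overline Q$, at which point the $\overline Q$-convergence takes over and forces $\lim_{k\to\infty}\mathcal{F}^{k}(S)=A$. This shows $A$ is a strict attractor with basin containing $B$; conversely, by statement (4) of Proposition~\ref{prop:attractor} the strict-attractor basin is a Conley-attractor basin, and the Conley basin of $A$ is exactly $B$, so the two basins coincide. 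The main obstacle I anticipate is precisely the completeness gap in the opening paragraph: one cannot apply Hutchinson directly on $B$, and the whole argument hinges on extracting the compact invariant block $\overline Q$ from Theorem~\ref{thm:block} to serve as the complete space. Once that block is in hand, the identification of $A$ with Hutchinson's attractor and the basin computation are routine.
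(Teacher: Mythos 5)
Your proof is correct, and it shares the paper's overall strategy of reducing the corollary to Hutchinson's theorem, but it executes the reduction differently and, in doing so, repairs a real weakness in the paper's own argument. The paper's proof begins ``if $S$ is any compact subset of $B$ containing $A$, then $S$ is a complete metric space'' and then invokes Hutchinson to produce a unique strict attractor $A'$ ``in $B$''; but an arbitrary compact $S\subset B$ need not be $\mathcal{F}$-invariant, and $B$ itself need not be complete in the metric $\hat d$, so Hutchinson's theorem does not literally apply to either. Your detour through Theorem~\ref{thm:block} supplies exactly the missing object: a compact (hence complete) set $\overline Q\subset B$ with $\mathcal{F}(\overline Q)\subset\overline Q$, on which Hutchinson applies verbatim. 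The two arguments also identify $A'$ with $A$ differently --- the paper compares $\lim_k\mathcal{F}^k(\overline U)$ computed two ways for an open $U$ containing both sets, while you feed the invariant set $A$ itself into the contraction dynamics on $\overline Q$ --- and your explicit promotion step (using statement (2) of Proposition~\ref{prop:attractor} to push an arbitrary compact $S\subset B$ into $Q$ before letting the $\overline Q$-convergence take over) makes precise the basin claim that the paper asserts rather tersely. The cost of your route is the reliance on Theorem~\ref{thm:block}, and hence on compactness of the ambient space (which the section's standing hypotheses do grant even though the corollary's statement says only ``metric space''); what it buys is a proof in which every application of Hutchinson's theorem is to a space that genuinely satisfies its hypotheses. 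The only point left implicit in both your argument and the paper's is the degenerate case $A=\emptyset$, which must be excluded since strict attractors are nonempty by definition.
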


\begin{proof}
If $S$ is any compact subset of $B$ containing $A$, then $S$ is a complete
metric space. Hutchinson's result implies that there is a unique strict
attractor $A^{\prime}$ in $B$ and that $B \subseteq B^{\prime}$, where
$B^{\prime}$ is the attractor of $A^{\prime}$. It only remains to show that
$A^{\prime}=A$ and $B^{\prime}=B$. Let $U$ be an open set containing $A$ and
$A^{\prime}$ and such that $\overline U\subset B$. Then by the definitions of
the Conley and strict attractor $A^{\prime}= \lim_{k\rightarrow\infty}
{\mathcal{F}}^{k}({\overline U} )= A$. Moreover, if $A \subset U
\subset{\overline U} \subset B^{\prime}$, then by the definition of strict
attractor we have $\lim_{k\rightarrow\infty} {\mathcal{F}}^{k}({\overline U} )
= A^{\prime}= A$. Therefore $B^{\prime}\subseteq B$.
\end{proof}

\begin{theorem}
\label{thm:affine} If ${\mathcal{F}}$ is an affine IFS or a M\"obius IFS with
a non-trivial Conley attractor $A$, then $A$ is a strict attractor and it is unique.
\end{theorem}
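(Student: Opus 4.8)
The plan is to leverage the two key facts cited in the excerpt: (i) by the recent results of \cite{ABVW,V}, if an affine or M\"obius IFS possesses a (strict) attractor, then the functions in the IFS must be contractions with respect to some metric compatible with the topology; and (ii) Corollary~\ref{cor:contractive}, which upgrades a Conley attractor to a strict attractor once contractivity on the basin is established. So the heart of the argument is to bridge the gap between \emph{having a Conley attractor} and \emph{having a strict attractor} so that the cited contractivity theorem can be invoked.

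The key steps, in order, would be as follows. First, let $A$ be the given non-trivial Conley attractor with basin $B$. By Theorem~\ref{thm:block}, $A$ possesses an attractor block $Q$, i.e.\ an open set with $\mathcal{F}(\overline{Q}) \subset Q^{\circ}$ and $A = \bigcap_{k} \mathcal{F}^{k}(\overline{Q})$. The attractor block gives us a nested, forward-invariant neighborhood on which the dynamics is ``trapped,'' and this is exactly the ingredient needed to produce a strict attractor structure. Second, I would argue that on this trapping region the restricted IFS genuinely has a strict attractor equal to $A$: because $\overline{Q}$ is compact and mapped into its own interior, the Hausdorff limit $\lim_{k\to\infty}\mathcal{F}^{k}(S) = A$ holds for every compact $S$ with $A \subseteq S \subset B$ (statement (2) of Proposition~\ref{prop:attractor}), which is precisely condition (ii) in Definition~\ref{def:strictattractor}. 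This establishes that $A$ is already a strict attractor of $\mathcal{F}$ restricted to a suitable invariant domain.

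Third, having a strict attractor, I would invoke the cited rigidity results for affine and M\"obius IFSs: the existence of a strict attractor forces the maps of $\mathcal{F}$ to be contractions in a compatible metric $\hat d$, at least on the relevant invariant region. Then Corollary~\ref{cor:contractive} applies directly — contractivity on $B$ promotes $A$ to a strict attractor with basin exactly $B$ — and Hutchinson's theorem gives uniqueness, since a contractive IFS on a complete metric space has a \emph{unique} strict attractor. For uniqueness over the whole space, I would observe that any two non-trivial Conley attractors would each yield contractivity, but a globally contractive affine or M\"obius IFS admits only one fixed compact invariant set, forcing the two attractors to coincide.

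\textbf{The main obstacle} I anticipate is the localization issue: the cited theorems from \cite{ABVW,V} are typically stated for IFSs having a global attractor, whereas here the contractivity may only hold on the basin $B$ or on the trapping region $\overline{Q}$, which need not be all of $\mathbb{X} = \mathbb{R}^{n}$ or $\widehat{\mathbb{C}}$. Care is needed to confirm that an affine or M\"obius map contractive on an invariant neighborhood of its attractor is in fact globally contractive in a compatible metric — this relies on the special rigidity of affine and M\"obius maps (their derivatives or multipliers being essentially constant in the affine case, or controlled by the boundary dynamics in the M\"obius case), so that local contraction near the attractor propagates to a global statement. Establishing this step rigorously, and thereby matching the hypotheses of the cited results, is where the real work lies; the remaining deductions are then routine applications of Corollary~\ref{cor:contractive} and Hutchinson's theorem.
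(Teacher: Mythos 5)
Your overall skeleton --- attractor block from Theorem~\ref{thm:block}, the rigidity results of \cite{ABVW,V}, Corollary~\ref{cor:contractive}, and Hutchinson's theorem for uniqueness --- matches the paper's, but the middle of your argument has a genuine gap. In your second step you claim that statement (2) of Proposition~\ref{prop:attractor} already gives condition (ii) of Definition~\ref{def:strictattractor} on a suitable invariant domain. It does not: the strict-attractor definition requires $\lim_{k\rightarrow\infty}\mathcal{F}^{k}(S)=A$ for \emph{every} compact $S$ contained in a neighborhood $U$ of $A$, including small sets (e.g.\ singletons) that do not contain $A$, whereas Proposition~\ref{prop:attractor}(2) only covers compact $S$ with $A\subseteq S$. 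Closing exactly that gap --- convergence for arbitrary compact subsets rather than supersets of $A$ --- is the whole content of Section~\ref{sec:SC}, and there it is obtained only \emph{after} contractivity (or point-fiberedness) is in hand. So you cannot first establish that $A$ is a strict attractor and then invoke a ``strict attractor implies contractive'' theorem; the logic would be circular.

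The resolution, and the route the paper actually takes, is that \cite[Theorem 1.1]{ABVW} (and its M\"obius analogue in \cite{V}) is not hypothesized on the existence of a strict attractor but on the existence of a compact set $Q$ with $\mathcal{F}(Q)\subset Q^{\circ}$ --- precisely the attractor block you already produced in your first step --- and its conclusion is that $\mathcal{F}$ is contractive on all of $\mathbb{R}^{n}$ (respectively $\widehat{\mathbb{C}}$). This dissolves both your second and third steps and the ``localization obstacle'' you flag at the end: no propagation from local to global contractivity is needed, because the cited theorem delivers global contractivity directly from the trapping region. With that in hand, Corollary~\ref{cor:contractive} and Hutchinson's theorem finish the argument exactly as you describe.
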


\begin{proof}
The theorem for the affine case follows from Theorem~\ref{thm:block},
\cite[Theorem 1.1]{ABVW}, and Corollary~\ref{cor:contractive}. According to
Theorem~\ref{thm:block} the Conley attractor $A$ has an attractor block $Q$.
According to \cite[Theorem 1.1]{ABVW}, if there is a compact set $Q$ such that
${\mathcal{F}}(Q) \subset Q^{o}$, then ${\mathcal{F}}$ is contractive on
${\mathbb{R}}^{n}$. By Corollary~\ref{cor:contractive} the Conley attractor
$A$ is a strict attractor. By Hutchinson's theorem, there is a unique strict
attractor in ${\mathbb{R}}^{n}$.

The proof in the M\"obius case is the same except that \cite[Theorem 1.1]{V}
is used in place of \cite[Theorem 1.1]{ABVW}.
\end{proof}

The analogous result to Theorem~\ref{thm:affine} fails for a projective IFS on
the projective plane $\mathbb{RP}^{2}$. See Example~\ref{ex:projNS} in
Section~\ref{sec:examples}.

The next theorem generalizes Corollary~\ref{cor:contractive} by replacing the
contractivity condition by a weaker condition called the
\textit{point-fibered} condition. Let $\Omega$ denote the set of all infinite
sequences $\{\sigma_{k}\}_{k=1}^{\infty}$ of symbols belonging to the alphabet
$\{1,...,N\}$. A typical element of $\Omega$ can be denoted as $\sigma
=\sigma_{1}\sigma_{2}\sigma_{3} \cdots$. With
\[
d_{\Omega}(\sigma,\omega) =
\begin{cases}
0 & \text{when $\sigma=\omega$}\\
2^{-k} & \text{when $k$ is the least index for which $\sigma_{k}\neq\omega
_{k}$},
\end{cases}
\]
$(\Omega, d_{\Omega})$ is a compact metric space called a \textit{code space}.
The topology on $\Omega$ induced by the metric $d_{\Omega}$ is the same as the
product topology that is obtained by treating $\Omega$ as the infinite product
space $\{1,...,N\}^{\infty}.$ For an IFS ${\mathcal{F}}$ and $\sigma\in\Omega
$, we use the shorthand notation
\[
f_{\sigma| k} = f_{\sigma_{1}}\circ f_{\sigma_{2}} \circ\cdots\circ
f_{\sigma_{k}}.
\]
The limit
\[
\pi_{{\mathcal{F}}}(\sigma,x) := \lim_{k\rightarrow\infty} f_{\sigma| k}(x),
\]
if it exists, is referred to as a \textit{fiber} of the IFS ${\mathcal{F}}$.

\begin{definition}
\label{def:ptF} An IFS $\mathcal{F}$ is \textbf{point-fibered} on a set $B$
if
\begin{equation}
\label{eq:ptF}\pi_{{\mathcal{F}}}(\sigma):=\lim_{k\rightarrow\infty}
f_{\sigma| k}(x),
\end{equation}
exists for all $\sigma\in\Omega$ and, for each $\sigma$, is independent of
$x\in B$.
\end{definition}

A main reason for the importance of the point-fibered concept is that it leads
to an addressing scheme for the points of a strict attractor, an addressing
scheme that relates naturally to the functions in the IFS. The addresses are
infinite strings in the alphabet $\{1,2,\dots, N\}$, where $N$ is the number
of functions in the IFS.  The precisely definition is as follows.

\begin{definition}
\label{def:coding} Let $\mathcal{F}$ be an IFS on a metric space $\mathbb{X}$
consisting of $N$ continuous functions. If $\pi:\Omega\rightarrow\mathbb{X}$
is a continuous mapping such that the following diagram commutes for all $n
\in\{1,2,\dots, N\}$
\begin{equation}%
\begin{array}
[c]{ccc}%
\Omega & \overset{s_{n}}{\rightarrow} & \Omega\\
\pi\downarrow\text{\ \ \ \ } &  & \text{ \ \ \ }\downarrow\pi\\
\mathbb{X} & \underset{f_{n}}{\rightarrow} & \mathbb{X}%
\end{array}
\label{commutediagram}%
\end{equation}
where $s_{n}$ is the inverse shift defined by $s_{n}(\sigma)= n\sigma$, then
$\pi$ is called a \textbf{coding map} for $\mathcal{F}$. The map $\pi$ is also
referred to as an \textbf{addressing function}.
\end{definition}

\begin{theorem}
\label{thm:ptF} If F is a point-fibered iterated function system on a compact
metric space $\mathbb{X}$, then

\begin{enumerate}
\item ${\mathcal{F}} : {\mathbb{H}}({\mathbb{X}}) \rightarrow{\mathbb{H}%
}({\mathbb{X}})$ has a unique fixed-point $A \in{\mathbb{H}}({\mathbb{X}})$,
i.e. ${{\mathcal{F}}}(A) = A$;

\item $A$ is the unique strict attractor of ${\mathcal{F}}$ in $\mathbb{X}$;

\item the basin of $A$ is $\mathbb{X}$;

\item the map $\pi_{{\mathcal{F}}} \, : \, \Omega\rightarrow\mathbb{X}$ given
by $\pi_{{\mathcal{F}}}(\sigma):=\lim_{k\rightarrow\infty} f_{\sigma| k}(x)$
is a coding map;

\item the range of the the coding map $\pi_{{\mathcal{F}}}$ is $A$, i.e.
$\pi(\Omega) = A$.
\end{enumerate}
\end{theorem}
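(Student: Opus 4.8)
The plan is to exploit the point-fibered hypothesis to build the candidate attractor as the closure of the set of fibers, and then verify the five statements in a logical order that lets each feed the next. First I would define the coding map $\pi_{{\mathcal{F}}}(\sigma) = \lim_{k\to\infty} f_{\sigma|k}(x)$, which by hypothesis exists and is independent of $x\in\mathbb{X}$, and set $A := \pi_{{\mathcal{F}}}(\Omega)$. This immediately targets statement (5) once we know $A$ is the attractor, so the real work is to show $A$ is compact, that ${\mathcal{F}}(A) = A$, and that it attracts all of $\mathbb{H}(\mathbb{X})$. The compactness of $A$ should follow from showing $\pi_{{\mathcal{F}}}$ is continuous on the compact space $\Omega$: for this I would argue that the convergence $f_{\sigma|k}(x) \to \pi_{{\mathcal{F}}}(\sigma)$ is actually uniform in $\sigma$, using compactness of $\mathbb{X}$ together with the independence of the limit from $x$. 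Uniform convergence of a sequence of continuous functions on $\Omega\times\mathbb{X}$ gives continuity of the limit, hence $\pi_{{\mathcal{F}}}$ is continuous and $A = \pi_{{\mathcal{F}}}(\Omega)$ is compact.

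Next I would establish the commuting-diagram / invariance identity, which simultaneously gives statements (4) and the relation ${\mathcal{F}}(A) = A$ of statement (1). The key computation is
\[
f_n\bigl(\pi_{{\mathcal{F}}}(\sigma)\bigr) = f_n\Bigl(\lim_{k} f_{\sigma|k}(x)\Bigr) = \lim_k f_n\circ f_{\sigma|k}(x) = \lim_k f_{(n\sigma)|(k+1)}(x) = \pi_{{\mathcal{F}}}(n\sigma) = \pi_{{\mathcal{F}}}(s_n(\sigma)),
\]
where the second equality uses continuity of $f_n$. This is exactly $\pi_{{\mathcal{F}}}\circ s_n = f_n\circ\pi_{{\mathcal{F}}}$, proving (4). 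Taking the union over $n$ and over $\sigma\in\Omega$, and using that every element of $\Omega$ is $n\sigma$ for some $n$ and some $\sigma$ (i.e. $\Omega = \bigcup_n s_n(\Omega)$), yields ${\mathcal{F}}(A) = \bigcup_n f_n(A) = \bigcup_n \pi_{{\mathcal{F}}}(s_n(\Omega)) = \pi_{{\mathcal{F}}}(\Omega) = A$.

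With invariance in hand, I would turn to the attraction property, which is the heart of statements (1)–(3). The goal is to show $\lim_{k\to\infty} d_{\mathbb{H}}({\mathcal{F}}^k(S), A) = 0$ for every $S\in\mathbb{H}(\mathbb{X})$, and in particular for $S = \mathbb{X}$ (using compactness of $\mathbb{X}$). I would unwind ${\mathcal{F}}^k(S) = \bigcup_{|\omega| = k} f_\omega(S)$ over words $\omega$ of length $k$, and control the Hausdorff distance from both sides: every point of ${\mathcal{F}}^k(S)$ lies near some $\pi_{{\mathcal{F}}}(\sigma)$ because $f_{\sigma|k}(s)$ is close to $\pi_{{\mathcal{F}}}(\sigma)$ by the uniform convergence, and conversely every $\pi_{{\mathcal{F}}}(\sigma)$ is approximated by $f_{\sigma|k}(s)\in{\mathcal{F}}^k(S)$ for any fixed $s\in S$. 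The uniform convergence established in the first step is precisely what makes both estimates uniform over $\Omega$ and hence gives Hausdorff convergence; since this holds with basin all of $\mathbb{X}$, statements (1), (2), and (3) follow, and then uniqueness of the fixed point in (1) follows because any fixed point $A'$ satisfies ${\mathcal{F}}^k(A') = A'$ for all $k$, forcing $A' = \lim_k {\mathcal{F}}^k(A') = A$.

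The main obstacle I anticipate is the \emph{uniformity} of the convergence $f_{\sigma|k}(x)\to\pi_{{\mathcal{F}}}(\sigma)$ in both $\sigma$ and $x$. The definition only grants pointwise existence of the fiber for each $\sigma$ and independence of the limit from $x$; upgrading this to uniform convergence is what powers continuity of $\pi_{{\mathcal{F}}}$, compactness of $A$, and the Hausdorff attraction all at once. I expect to need compactness of $\mathbb{X}$ (and of $\Omega$) in an essential way here, perhaps via a diagonal or equicontinuity-type argument, or by a contradiction extracting a convergent subsequence in $\Omega\times\mathbb{X}$ that violates independence of the limit from the base point. Once uniform convergence is secured, the remaining steps are largely formal manipulations of the Hausdorff metric and the semigroup structure of the words.
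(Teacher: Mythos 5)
First, note that the paper does not actually prove this theorem: its ``proof'' is a citation to two propositions in Kieninger's thesis, so your proposal is a genuine attempt at a self-contained argument. The architecture you choose is the standard one and, \emph{granted} uniform convergence of $f_{\sigma|k}(x)$ to $\pi_{\mathcal{F}}(\sigma)$ jointly in $(\sigma,x)$, every step you outline is correct: continuity of $\pi_{\mathcal{F}}$ as a uniform limit of (locally constant) continuous maps on $\Omega$, the commuting-diagram computation $f_n\circ\pi_{\mathcal{F}}=\pi_{\mathcal{F}}\circ s_n$, the identity $\mathcal{F}(A)=\pi_{\mathcal{F}}(\bigcup_n s_n(\Omega))=A$, the two-sided Hausdorff estimate for $\mathcal{F}^k(S)\to A$, and uniqueness of the fixed point via $A'=\mathcal{F}^k(A')\to A$.

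The genuine gap is exactly the step you flag as the ``main obstacle'': the uniformity cannot be derived from Definition~\ref{def:ptF} as literally stated, so no equicontinuity or diagonal argument will close it. Concretely, let $\mathbb{X}=S^1$ and let $f_1=f_2=f$ be a parabolic circle homeomorphism with a single semi-stable fixed point $p$ (attracting on one side, repelling on the other). Then $f_{\sigma|k}=f^k$ for every $\sigma$, and $f^k(x)\to p$ for every $x$, so the IFS is point-fibered in the sense of Definition~\ref{def:ptF} with $\pi_{\mathcal{F}}\equiv p$; yet $\mathcal{F}^k(\mathbb{X})=\mathbb{X}$ for all $k$, so $\{p\}$ is not a strict attractor with basin $\mathbb{X}$ and $\mathcal{F}$ has two fixed points in $\mathbb{H}(\mathbb{X})$, namely $\{p\}$ and $\mathbb{X}$. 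Thus the theorem is false under the pointwise reading of the definition, and your proof cannot be completed from it. The repair is to work with Kieninger's actual notion, in which the fibre is the Hausdorff limit of compact sets: since $f_{\sigma|(k+1)}(\mathbb{X})\subseteq f_{\sigma|k}(\mathbb{X})$, these sets are nested, the limit $\bigcap_k f_{\sigma|k}(\mathbb{X})$ always exists, and point-fibred means it is a singleton for every $\sigma$. Under that hypothesis $\operatorname{diam}\bigl(f_{\sigma|k}(\mathbb{X})\bigr)$ is a decreasing sequence of continuous functions of $\sigma$ converging pointwise to $0$ on the compact space $\Omega$, so Dini's theorem gives the joint uniformity you need, and the rest of your argument then goes through verbatim.
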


\begin{proof}
The theorem follows from \cite[Proposition 4.4.2, p.107, Proposition 3.4.4,
p.77]{kieninger}.
\end{proof}

If ${\mathcal{F}}$ is an affine IFS, then the converse of statement (4) in
Theorem~\ref{thm:ptF} is true. A coding map of an affine IFS must be of the
form in Equation~\ref{eq:ptF}. The following result appears in \cite[Theorem
7.2]{ABVW}, where the \textit{affine hull} of a set is the smallest affine
subspace containing the set.

\begin{theorem}
If an affine IFS ${\mathcal{F}}$ on ${\mathbb{R}}^{n}$ has a coding map $\pi$
and the affine hull of $A := \pi(\Omega)$ equals ${\mathbb{R}}^{n}$, then
${\mathcal{F}}$ is point-fibered on ${\mathbb{R}}^{n}$. Moreover $A$ is the
strict attractor of ${\mathcal{F}}$.
\end{theorem}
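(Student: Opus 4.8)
The plan is to first extract the fiber limits from the commuting diagram on the attractor itself, then spread them to all of $\mathbb{R}^n$ using the affine-hull hypothesis, and finally upgrade the resulting pointwise convergence to uniform convergence in order to obtain the strict attractor.

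First I would unwind the commuting diagram (\ref{commutediagram}). Iterating $\pi\circ s_n=f_n\circ\pi$ gives $\pi(\sigma_1\cdots\sigma_k\tau)=f_{\sigma|k}(\pi(\tau))$ for every $\sigma\in\Omega$ and every tail $\tau\in\Omega$. Fixing $\sigma$ and letting $k\to\infty$, the sequence $\sigma|k\,\tau$ converges to $\sigma$ in $\Omega$ (the two agree on their first $k$ symbols, so $d_{\Omega}\leq 2^{-(k+1)}$), and continuity of $\pi$ yields $\lim_{k}f_{\sigma|k}(\pi(\tau))=\pi(\sigma)$. Thus the fiber limit exists and equals $\pi(\sigma)$ at every point of $A=\pi(\Omega)$. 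The same computation gives $\mathcal{F}(A)=\bigcup_n f_n(\pi(\Omega))=\pi\big(\bigcup_n s_n(\Omega)\big)=\pi(\Omega)=A$.

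To pass from points of $A$ to all of $\mathbb{R}^n$ I would invoke the hypothesis that the affine hull of $A$ is $\mathbb{R}^n$: choose affinely independent $p_0,\dots,p_n\in A$, say $p_i=\pi(\tau^{(i)})$. Since each $f_{\sigma|k}$ is affine it preserves affine combinations, so writing an arbitrary $x=\sum_i\lambda_i p_i$ with $\sum_i\lambda_i=1$ gives $f_{\sigma|k}(x)=\sum_i\lambda_i f_{\sigma|k}(p_i)\to\sum_i\lambda_i\pi(\sigma)=\pi(\sigma)$. Hence the fiber limit exists for every $x$ and is independent of $x$, so $\mathcal{F}$ is point-fibered on $\mathbb{R}^n$ with $\pi_{\mathcal{F}}=\pi$. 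I expect the main obstacle to be the next step: because $\mathbb{R}^n$ is not compact, Theorem~\ref{thm:ptF} does not apply directly, and I must promote this pointwise convergence to a uniform estimate by hand. Writing $f_{\sigma|k}(x)=f_{\sigma|k}(p_0)+L_{\sigma|k}(x-p_0)$ with $L_{\sigma|k}$ the linear part, I would note $L_{\sigma|k}(p_i-p_0)=\pi(\sigma|k\,\tau^{(i)})-\pi(\sigma|k\,\tau^{(0)})$; the two arguments agree in their first $k$ symbols, so their $d_{\Omega}$-distance is at most $2^{-(k+1)}$, and uniform continuity of $\pi$ on the compact space $\Omega$ forces $\sup_{\sigma}\|L_{\sigma|k}(p_i-p_0)\|\to 0$. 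Since the vectors $p_i-p_0$ form a basis, this gives $\sup_{\sigma}\|L_{\sigma|k}\|\to 0$, and similarly $\sup_{\sigma}\|f_{\sigma|k}(p_0)-\pi(\sigma)\|\to 0$.

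Combining these, for any compact $S$ one obtains $\sup_{x\in S,\,\sigma\in\Omega}\|f_{\sigma|k}(x)-\pi(\sigma)\|\to 0$, and I would read off the Hausdorff convergence directly. Every point of $\mathcal{F}^k(S)$ has the form $f_{\sigma|k}(x)$ and lies within this supremum of $\pi(\sigma)\in A$, while conversely every $\pi(\sigma)\in A$ lies within the same quantity of $f_{\sigma|k}(x)\in\mathcal{F}^k(S)$. Hence $d_{\mathbb{H}}(\mathcal{F}^k(S),A)\to 0$ for every compact $S\subset\mathbb{R}^n$, so $A$ is a strict attractor with basin $\mathbb{R}^n$. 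The delicate point throughout is the uniformity in $\sigma$, which rests on compactness of the code space $\Omega$ together with the affine independence of the $p_i$.
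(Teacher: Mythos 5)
Your argument is correct and complete. Note that the paper does not actually prove this theorem: it defers entirely to \cite[Theorem 7.2]{ABVW}, so there is no in-paper proof to compare against, and your write-up serves as a valid self-contained substitute. Each of your three steps checks out: iterating the commuting diagram \eqref{commutediagram} gives $\pi(\sigma_1\cdots\sigma_k\tau)=f_{\sigma|k}(\pi(\tau))$, and since $d_{\Omega}(\sigma_1\cdots\sigma_k\tau,\sigma)\le 2^{-(k+1)}$, continuity of $\pi$ yields the fiber limits on $A=\pi(\Omega)$ together with $\mathcal{F}(A)=A$; the hypothesis on the affine hull supplies $n+1$ affinely independent points $p_0,\dots,p_n\in A$, and preservation of affine combinations by the affine maps $f_{\sigma|k}$ extends the fiber limits, with the same value $\pi(\sigma)$, to every $x\in\mathbb{R}^n$; and the decomposition $f_{\sigma|k}(x)=f_{\sigma|k}(p_0)+L_{\sigma|k}(x-p_0)$ combined with uniform continuity of $\pi$ on the compact code space gives $\sup_{\sigma}\|L_{\sigma|k}\|\to 0$ (because $p_1-p_0,\dots,p_n-p_0$ is a basis) and $\sup_{\sigma}\|f_{\sigma|k}(p_0)-\pi(\sigma)\|\to 0$, hence uniform-on-compacta convergence and the Hausdorff limit $\lim_k\mathcal{F}^k(S)=A$ for every nonempty compact $S$. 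You correctly identify the one genuinely delicate point: Theorem~\ref{thm:ptF} is stated only for compact $\mathbb{X}$, so pointwise fiber convergence alone would not suffice, and the uniformity in $\sigma$ extracted from compactness of $\Omega$ and affine independence of the $p_i$ is exactly what promotes point-fibering to the strict-attractor conclusion with basin all of $\mathbb{R}^n$.
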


The following generalization of Corollary~\ref{cor:contractive} follows from
Theorem~\ref{thm:ptF} in exactly the same way as
Corollary~\ref{cor:contractive} followed from Hutchinson's theorem. It is a
generalization because if it easy to show that if ${\mathcal{F}}$ is
contractive on a complete metric space $\mathbb{X}$, then ${\mathcal{F}}$ is
point-fibered on $\mathbb{X}$. Research on when the property of being
point-fibered implies that the the IFS is contractive is ongoing; see
\cite{kameyama}.

\begin{corollary}
Let ${\mathcal{F}}$ be an IFS on a compact metric space with a Conley
attractor $A$ and basin $B$. If ${\mathcal{F}}$ is point-fibered on $B$, then
$A$ is a strict attractor of ${\mathcal{F}}$ with basin $B$.
\end{corollary}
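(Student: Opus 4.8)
The plan is to mirror the proof of Corollary~\ref{cor:contractive}, with Theorem~\ref{thm:ptF} playing the role that Hutchinson's theorem played there. The one genuine difference is that Theorem~\ref{thm:ptF} applies only to an IFS whose maps are \emph{self-maps} of a \emph{compact} space, whereas the basin $B$ is merely open and need not be forward invariant. To bridge this gap I would first replace $B$ by a compact, forward-invariant neighborhood of $A$ supplied by the attractor-block machinery of Section~\ref{sec:block}.

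Concretely, since $A$ is compact and $B$ is open with $A\subseteq B$, I would choose an open set $\mathcal{N}$ with $A\subseteq\mathcal{N}\subseteq\overline{\mathcal{N}}\subseteq B$. Theorem~\ref{thm:block} then furnishes an attractor block $Q$ for $A$ with $Q\subseteq\mathcal{N}$; recall that such a $Q$ is open and satisfies $\mathcal{F}(\overline Q)\subseteq Q^{\circ}$, so in particular $\mathcal{F}(\overline Q)\subseteq\overline Q$ and $\overline Q\subseteq B$. Consequently each $f\in\mathcal{F}$ restricts to a continuous self-map of the compact metric space $\overline Q$, so $\mathcal{F}$ may be regarded as an IFS on $\overline Q$; and since $\mathcal{F}$ is point-fibered on $B\supseteq\overline Q$, the limit $\pi_{\mathcal{F}}(\sigma)=\lim_{k\to\infty}f_{\sigma|k}(x)$ exists and is independent of $x$ for every $x\in\overline Q$, that is, $\mathcal{F}$ is point-fibered on $\overline Q$.

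Next I would apply Theorem~\ref{thm:ptF} to the IFS $\mathcal{F}$ on $\overline Q$. Statements (2) and (3) yield a unique strict attractor $A'$ of $\mathcal{F}$ whose basin is all of $\overline Q$; that is, $\lim_{k\to\infty}\mathcal{F}^{k}(S)=A'$ for every compact $S\subseteq\overline Q$, the Hausdorff limit inside $\overline Q$ agreeing with the one in $\mathbb{X}$ because $\overline Q$ carries the subspace metric. Taking $S=\overline Q$ and using that $Q$ is an attractor block for $A$ (so $\lim_{k\to\infty}\mathcal{F}^{k}(\overline Q)=A$) identifies $A'=A$. Since $Q$ is open, contains $A$, and satisfies $\lim_{k\to\infty}\mathcal{F}^{k}(S)=A$ for every compact $S\subseteq Q\subseteq\overline Q$, the set $U=Q$ witnesses that $A$ is a strict attractor of $\mathcal{F}$. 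Finally, by statement (4) of Proposition~\ref{prop:attractor} the basin of $A$ as a strict attractor coincides with its basin as a Conley attractor, and since the latter is uniquely determined by $A$ and equals $B$, the strict basin must be $B$ as well.

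The main obstacle is precisely the passage flagged above: Theorem~\ref{thm:ptF} cannot be invoked on $B$ directly, so the crux is to manufacture a compact, forward-invariant set sandwiched between $A$ and $B$. The attractor block $Q$ does this cleanly, and the only remaining care is the routine verification that convergence in the Hausdorff metric of $\mathbb{H}(\overline Q)$ is the same as convergence in $\mathbb{H}(\mathbb{X})$.
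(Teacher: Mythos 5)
Your proof is correct, and at the top level it follows the route the paper itself prescribes: the paper offers no separate argument for this corollary, stating only that it follows from Theorem~\ref{thm:ptF} ``in exactly the same way as Corollary~\ref{cor:contractive} followed from Hutchinson's theorem.'' Where you genuinely depart from that template is in inserting Theorem~\ref{thm:block}. The template proof of Corollary~\ref{cor:contractive} applies the fixed-point theorem to ``any compact subset $S$ of $B$ containing $A$,'' which is complete but need not be forward invariant under $\mathcal{F}$, so the maps of $\mathcal{F}$ are not self-maps of $S$ and neither Hutchinson's theorem nor Theorem~\ref{thm:ptF} literally applies to it. Your use of an attractor block $Q\subseteq\mathcal{N}\subseteq B$ to manufacture the compact, $\mathcal{F}$-invariant set $\overline Q$ repairs exactly this point, and the rest of your argument --- identifying $A'=A$ via $\lim_{k\to\infty}\mathcal{F}^{k}(\overline Q)=A$, taking $U=Q$ as the witnessing open set, and recovering the basin from Proposition~\ref{prop:attractor}(4) together with the uniqueness of the Conley basin --- is sound. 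In short, your version buys a rigorous reduction to the compact invariant setting that the paper's one-line proof glosses over, at the modest cost of invoking the attractor-block machinery of Section~\ref{sec:block}.
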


\section{Attractor-Repeller Pairs}

\label{sec:AR}

In this section it is assumed that the iterated function system is invertible.

\begin{definition}
A set $R\subset\mathbb{X}$ is said to be a \textbf{repeller} of the invertible
IFS $\mathcal{F}$ if $R$ is a Conley attractor of $\mathcal{F}^{\ast}%
=\{f^{-1}\,:\,f\in{\mathcal{F}}\}$. The \textbf{basin of a repeller} of
$\mathcal{F}$ is the basin for the corresponding Conley attractor of
$\mathcal{F}^{\ast}.$
\end{definition}

\begin{theorem}
\label{thm:ARpair} Let $\mathcal{F}$ be an invertible IFS on a compact metric
space $\mathbb{X}$. If $A$ is a Conley attractor of $\mathcal{F}$ with basin
$B$, then $A^{*} := \mathbb{X}\backslash B$ is a repeller of $\mathcal{F}$
with basin $\mathbb{X}\backslash A$.
\end{theorem}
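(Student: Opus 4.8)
The plan is to establish that $A^{*} := \mathbb{X}\setminus B$ is a Conley attractor of $\mathcal{F}^{*}$ whose basin is $\mathbb{X}\setminus A$. The natural strategy is to exploit the duality between forward iteration under $\mathcal{F}$ and backward iteration under $\mathcal{F}^{*}$, reading the attractor block guaranteed by Theorem~\ref{thm:block} from the two ends. First I would invoke Theorem~\ref{thm:block} to obtain an attractor block $Q$ for $A$ with respect to $\mathcal{F}$, so that $\mathcal{F}(\overline{Q})\subset Q^{\circ}$, $A\subset Q\subset B$, and $A=\bigcap_{k}\mathcal{F}^{k}(\overline{Q})$. The key observation is that the complementary set $\mathbb{X}\setminus Q^{\circ}$ should serve as an attractor block for $A^{*}$ with respect to $\mathcal{F}^{*}$. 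Because each $f\in\mathcal{F}$ is a homeomorphism of the compact space $\mathbb{X}$, the condition $\mathcal{F}(\overline{Q})\subset Q^{\circ}$ dualizes: taking complements and applying $f^{-1}$ turns the forward containment into a containment $\mathcal{F}^{*}(\overline{\mathbb{X}\setminus Q})\subset(\mathbb{X}\setminus Q)^{\circ}$, which I would verify carefully using that $f^{-1}(\mathbb{X}\setminus Q^{\circ})=\mathbb{X}\setminus f^{-1}(Q^{\circ})$ and that $\mathcal{F}(\overline{Q})\subset Q^{\circ}$ forces $\overline{Q}\subset f^{-1}(Q^{\circ})$ for each $f$.

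Granting that $P:=\mathbb{X}\setminus Q^{\circ}$ (or $\mathbb{X}\setminus Q$, whichever gives the clean open/closed matching) is an attractor block for $\mathcal{F}^{*}$, Proposition~\ref{prop:block} yields a Conley attractor $A^{*}=\bigcap_{k}(\mathcal{F}^{*})^{k}(\overline{P})$. The core of the argument is then to identify this intersection with $\mathbb{X}\setminus B$. I would argue set-theoretically: a point lies in $A^{*}$ iff its entire backward orbit under $\mathcal{F}^{*}$ stays in $\overline{P}$, i.e. never enters $Q^{\circ}$; dually, $x\in B$ iff some forward iterate of $x$ under $\mathcal{F}$ lands (and stays) inside the block $Q$, which by Lemma~\ref{lem:basin} is equivalent to $\omega(\{x\})\subset A$. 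Translating the basin characterization of Lemma~\ref{lem:basin} through the homeomorphism duality should give exactly $\mathbb{X}\setminus A^{*}=B$, i.e. $A^{*}=\mathbb{X}\setminus B$.

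Finally, to compute the basin of $A^{*}$, I would again apply Lemma~\ref{lem:basin}, now to the IFS $\mathcal{F}^{*}$: the basin of $A^{*}$ is $\{x:\omega_{\mathcal{F}^{*}}(\{x\})\subset A^{*}\}$. Using the duality, a point $x$ has all its $\mathcal{F}^{*}$-limit points in $A^{*}=\mathbb{X}\setminus B$ precisely when $x$ is not a point whose forward $\mathcal{F}$-orbit converges into $A$ except for the degenerate points of $A$ itself; unwinding this should produce $\mathbb{X}\setminus A$. The main obstacle, and the step demanding the most care, is the identification $A^{*}=\mathbb{X}\setminus B$ and the correct handling of the boundary $\partial Q$ under taking complements — since $Q$ is open, $\mathbb{X}\setminus Q$ is closed and the interior/closure bookkeeping in verifying the dual attractor-block inequality must be done precisely so that the strict inclusion $\mathcal{F}^{*}(\overline{P})\subset P^{\circ}$ genuinely holds. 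I expect compactness of $\mathbb{X}$ and the homeomorphism property to be exactly what makes the complement of an attractor block an attractor block for the inverse system, and the rest to follow from the characterizations already proved in Lemma~\ref{lem:basin} and Proposition~\ref{prop:attractor}.
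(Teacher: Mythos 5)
Your overall strategy is exactly the paper's: take an attractor block $Q$ for $A$ from Theorem~\ref{thm:block}, observe that its complement is an attractor block for $\mathcal{F}^{*}$, let $A^{*}$ be the resulting Conley attractor of $\mathcal{F}^{*}$ via Proposition~\ref{prop:block}, and identify $A^{*}$ with $\mathbb{X}\setminus B$. The complement-of-a-block verification you sketch is fine (with $Q$ open, $f(\overline{Q})\subset Q$ gives $f^{-1}(\mathbb{X}\setminus Q)\subset \mathbb{X}\setminus\overline{Q}=(\mathbb{X}\setminus Q)^{\circ}$). The genuine gap is in your ``set-theoretic'' identification of $A^{*}$. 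You assert that $y\in A^{*}$ iff the \emph{entire} backward orbit of $y$ under $\mathcal{F}^{*}$ stays in $\overline{P}$, i.e.\ never enters $Q^{\circ}$. For an IFS with more than one map this has the wrong quantifier: since $(\mathcal{F}^{*})^{k}(\overline{P})$ is a \emph{union} over words $w$ of length $k$ of the sets $(f_{w})^{-1}(\overline{P})$, membership in $\bigcap_{k}(\mathcal{F}^{*})^{k}(\overline{P})$ means that for every $k$ there \emph{exists} a length-$k$ composition $f_{w}$ with $f_{w}(y)\in\overline{P}$, i.e.\ $\mathcal{F}^{k}(\{y\})\cap\overline{P}\neq\emptyset$ for all $k$ --- not that all forward images avoid $Q^{\circ}$. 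The set you describe is in general a proper subset of $A^{*}$, whereas $\mathbb{X}\setminus B$ is exactly the existential set, because $y\in B$ iff $\mathcal{F}^{k}(\{y\})\subset Q$ for some $k$. So the identification as you state it would not close. The paper handles precisely this point with a selection argument: if $\mathcal{F}^{k}(x)\not\subset Q$ for every $k$, choose $x_{k}\in\mathcal{F}^{k}(x)\cap(\mathbb{X}\setminus Q)$ and note $x\in(\mathcal{F}^{*})^{k}(\{x_{k}\})\subset(\mathcal{F}^{*})^{k}(\mathbb{X}\setminus Q)$ for all $k$, forcing $x\in A^{*}$; this, together with a separate check that $B\cap A^{*}=\emptyset$ (using the invariance $\mathcal{F}^{*}(A^{*})=A^{*}$ from Proposition~\ref{prop:attractor}(1) to build, for each $x\in A^{*}$ and each $k$, a point of $\mathcal{F}^{k}(x)\cap A^{*}$, then applying Lemma~\ref{lem:basin}), yields $B=\mathbb{X}\setminus A^{*}$.

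Your treatment of the second assertion, that the basin of $A^{*}$ is $\mathbb{X}\setminus A$, is only a gesture (``unwinding this should produce $\mathbb{X}\setminus A$''). Once the first part is proved, the clean route is symmetry: apply the same argument to $\mathcal{F}^{*}$ with the attractor block $\mathbb{X}\setminus(\mathbb{X}\setminus Q)=Q$, whose associated Conley attractor of $\mathcal{F}=(\mathcal{F}^{*})^{*}$ is again $A$; hence the basin of $A^{*}$ is $\mathbb{X}\setminus A$. I would make that step explicit rather than trying to unwind $\omega$-limit sets of $\mathcal{F}^{*}$ directly.
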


\begin{proof}
By Theorem~\ref{thm:block} there is an attractor block $Q$ for $A$ with
respect to ${\mathcal{F}}$. It is easy to verify that the complement $Q^{*} :=
\mathbb{X}\setminus Q$ is an attractor block with respect to ${\mathcal{F}%
}^{*} $. Let $A^{*} = \lim_{k\rightarrow\infty}\mathcal{F^{*}}^{k}%
(\overline{Q^{*}})$ be the corresponding topological Conley attractor as
guaranteed by Proposition~\ref{prop:block}.

It is now sufficient to show that the basin $B$ of $A$ is $\mathbb{X
}\setminus A^{*}$, and to do this Lemma~\ref{lem:basin} is used. If $x \in Q$,
then $\lim_{k\rightarrow\infty} \vec{d} ({\mathcal{F}}^{k}(x),A) = 0$ because
$\lim_{k\rightarrow\infty} f^{k}({\overline Q}) = A$. Therefore $x \in B$. Now
let $x \in Q^{*}\setminus A^{*}$. If ${\mathcal{F}}^{k}(x) \subset Q$ for some
$k$, then again $\lim_{k\rightarrow\infty} \vec{d} ({\mathcal{F}}^{k}(x),A) =
0$ and the proof is complete. So, by way of contradiction, assume that
${\mathcal{F}}^{k}(x)$ is not a subset of $Q$ for any $k$. Then there is a set
$X = \{x_{k}\}$ such that $x_{k} \in{\mathcal{F}}^{k}(x)$ and $x_{k} \in
Q^{*}$. In this case $x \in{{\mathcal{F}}^{*}}^{k}(X) \subset{{\mathcal{F}%
}^{*}}^{k}(Q^{*})$ for all $k$. Since $\lim_{k\rightarrow\infty}%
\mathcal{F^{*}}^{k}(\overline{Q^{*}}) = A^{*}$, this implies that $x \in
A^{*}$, a contradiction.

It only remains to show that $B \cap A^{*} = \emptyset$. Let $x = x_{0}\in
A^{T}*$. Because ${\mathcal{F}}^{*}(A^{*}) = A^{*}$, by statement (1) of
Proposition~\ref{prop:attractor}, there is an $x_{1} \in A^{*}$ and an $f_{1}
\in{\mathcal{F}}$ such that $f_{1}^{-1}(x_{1}) = x_{0}$, i.e., $x_{1} =
f_{1}(x)$. For the same reason there is an $x_{2} \in A^{*}$ and an $f_{2}
\in{\mathcal{F}}$ such that $f_{2}^{-1}(x_{2}) = x_{1}$, i.e., $x_{2} = (f_{2}
\circ f_{1})(x)$. Continuing in this way, it is clear that ${\mathcal{F}}%
^{k}(x) \cap A^{*} \neq\emptyset$ for all $k>0$, which implies by
Lemma~\ref{lem:basin} that $x$ does not lie in $B$.
\end{proof}

\begin{definition}
If $\mathcal{F}$ is an invertible IFS on a compact metric space $\mathbb{X}$
and $A$ is a Conley attractor of $\mathcal{F}$ with basin $B$, then the set
\[
A^{\ast}:=\mathbb{X}\backslash B.
\]
is called the \textbf{dual repeller} of $A$.
\end{definition}

Examples of attractor-repeller pairs are shown in Section~\ref{sec:examples}.
The notion of a chain for an IFS is based on the notion of a chain for a
single function \cite{C1}.

\begin{definition}
Let $\varepsilon>0$ and let $\mathcal{F}$ be an IFS on $\mathbb{X}.$ An
$\varepsilon$\textbf{-chain} for $\mathcal{F}$ is a sequence of points
$\left\{  x_{i}\right\}  _{i=0}^{n}, \, n>0,$ in $\mathbb{X}$ such that for
each $i\in\{0,1,2,\dots,n-1\}$ there is an $f\in\mathcal{F}$ such that
$d(x_{i+1} ,f(x_{i}))<\varepsilon$. A point $x\in\mathbb{X}$ is
\textbf{chain-recurrent} for $\mathcal{F}$ if for every $\varepsilon>0$ there
is an $\varepsilon$-chain $\left\{  x_{i}\right\}  _{i=0}^{n}$ for
$\mathcal{F}$ such that $x_{0}=x_{n}=x$. The set of all chain recurrent points
for $\mathcal{F}$ is denoted by $\mathcal{R}:= \mathcal{R}\left(
\mathcal{F}\right) $.
\end{definition}

We refer to the following as the Conley-McGehee-Wiandt (CMW) theorem due to
previous versions in a non IFS context.

\begin{theorem}
\label{cmwthm} $[\text{CMW}]\;$ Let $\mathcal{F}$ be an invertible IFS on a
compact metric space $\mathbb{X}$. If $\mathcal{U}$ denotes the set of Conley
attractors of $\mathcal{F}$ and $\mathcal{R}$ denotes the set of chain
recurrent points of $\mathcal{F}$, then
\[
\mathcal{R}=\bigcap\limits_{A\in\mathcal{U}} ( A \cup A^{*}).
\]

\end{theorem}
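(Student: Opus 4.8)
The plan is to prove the two inclusions separately after a short reduction. Write $B$ for the basin of a Conley attractor $A$ and $A^{*}=\mathbb{X}\setminus B$ for its dual repeller; since $A\subset B$ we have the identity $\mathbb{X}\setminus(A\cup A^{*})=B\setminus A$, so the asserted equality is equivalent to the statement that a point $x$ lies in $\mathcal{R}$ exactly when $x\notin B\setminus A$ for \emph{every} Conley attractor $A$. Before starting, I would record a symmetry: reversing an $\varepsilon$-chain for $\mathcal{F}$ yields an $\varepsilon'$-chain for $\mathcal{F}^{*}$ with $\varepsilon'\to 0$ as $\varepsilon\to 0$, because $\mathbb{X}$ is compact and $\mathcal{F}$ consists of finitely many homeomorphisms, so the inverses are uniformly continuous. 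Hence $\mathcal{R}(\mathcal{F})=\mathcal{R}(\mathcal{F}^{*})$, which is consistent with the right-hand side being invariant under the exchange $A\leftrightarrow A^{*}$ furnished by Theorem~\ref{thm:ARpair}; this lets me treat $A$ and $A^{*}$ on the same footing.

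For the inclusion $\mathcal{R}\subseteq\bigcap_{A}(A\cup A^{*})$ I would fix a Conley attractor $A$ and show that no point of $B\setminus A$ is chain recurrent. Theorem~\ref{thm:block} supplies an attractor block $Q$ for $A$, and the key point is that the condition $\mathcal{F}(\overline{Q})\subset Q^{\circ}$ concerns the \emph{union} $\bigcup_{f}f(\overline{Q})$, hence constrains every $f\in\mathcal{F}$ at once. By compactness there is $\varepsilon>0$ with the $\varepsilon$-neighborhood of $\mathcal{F}(\overline{Q})$ still inside $Q$, which gives chain-trapping: every $\varepsilon$-chain that meets $\overline{Q}$ stays in $Q$ thereafter, and dually, since $Q^{*}=\mathbb{X}\setminus Q$ is an attractor block for $\mathcal{F}^{*}$, reversed chains from $\overline{Q^{*}}$ stay in $Q^{*}$. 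For $x\in B\setminus A$ one has $\vec{d}(\mathcal{F}^{k}(x),A)\to 0$ by Lemma~\ref{lem:basin}, so the genuine forward orbit enters $Q$, while the backward orbit enters $Q^{*}$; thus $x$ is a one-way ``connecting'' point. To rule out a small-$\varepsilon$ loop at $x$ I would promote the block to a monotone quantity: applying Theorem~\ref{thm:block} across a nested family of neighborhoods of $A$ produces blocks $Q_{t}$ with $\mathcal{F}(\overline{Q_{s}})\subset Q_{t}$ for $t<s$, from which the standard construction yields a continuous Lyapunov function $\ell$ for the pair $(A,A^{*})$ with $\ell(f(y))<\ell(y)$ for all $f\in\mathcal{F}$ and all $y\in B\setminus A$. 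Since the decrease is by a definite amount near the level $\ell(x)\in(0,1)$ while an $\varepsilon$-jump alters $\ell$ by less once $\varepsilon$ is small, $\ell$ strictly decreases along any chain from $x$, so it cannot return; hence $x\notin\mathcal{R}$.

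For the reverse inclusion $\bigcap_{A}(A\cup A^{*})\subseteq\mathcal{R}$ I would argue contrapositively: given $x\notin\mathcal{R}$, construct a Conley attractor $A$ with $x\in B\setminus A$. Fix $\varepsilon>0$ with no $\varepsilon$-chain from $x$ to $x$, and let $V$ be the set of points reachable from $x$ by $(\varepsilon/2)$-chains of length at least one. The spare slack forces $\mathcal{F}(\overline{V})\subset V^{\circ}$: if $z\in\overline{V}$, approximate $z$ by a reached point $z'\in V$, extend the chain to $f(z')$ for any $f\in\mathcal{F}$ (a genuine, hence admissible, step), and note that a whole $(\varepsilon/2)$-ball around $f(z')$ is then reachable, so by uniform continuity $f(z)\in V^{\circ}$. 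Thus $V$ is an attractor block and $A:=\bigcap_{k}\mathcal{F}^{k}(\overline{V})$ is a Conley attractor by Proposition~\ref{prop:block}. The choice of the two scales gives $x\notin\overline{V}$ (otherwise a point of $V$ within $\varepsilon/2$ of $x$ would complete an $\varepsilon$-chain $x\to x$), so $x\notin A\subseteq\overline{V}$; and since $\mathcal{F}(x)\subseteq V$ and chain-trapping keeps $\mathcal{F}^{k}(x)\subset\overline{V}$, Proposition~\ref{prop:attractor}(2) and Lemma~\ref{lem:basin} give $x\in B$. Therefore $x\in B\setminus A=\mathbb{X}\setminus(A\cup A^{*})$, as required.

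I expect the main obstacle to be the construction in the third paragraph: verifying \emph{simultaneously} that $\overline{V}$ is a genuine attractor block and that $x\notin\overline{V}$, while also confirming that $x$ lands in the basin of the resulting $A$. This requires careful bookkeeping of the chain-slack (separating the $\varepsilon/2$ used to reach $V$ from the room needed to absorb $\mathcal{F}(\overline{V})$ into $V^{\circ}$) together with the uniform continuity of the finitely many maps on the compact space $\mathbb{X}$. The secondary technical point is the passage, in the second paragraph, from nested attractor blocks to an honest Lyapunov function with strict decrease under \emph{every} $f\in\mathcal{F}$; here the fact that an attractor block controls the union over all $f$ at once is exactly what makes the IFS (multi-map) version go through as in the single-map case.
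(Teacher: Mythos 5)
Your proposal is correct in outline, and its second half is essentially the paper's own argument: the paper also takes the set $U$ of points reachable from $x$ by $\epsilon$-chains, checks that $x\notin U$, that $U$ is open, and that $\mathcal{F}(\overline{U})\subset U$, and then invokes Proposition~\ref{prop:block}, Lemma~\ref{lem:basin} and Theorem~\ref{thm:ARpair} exactly as you do; your $\varepsilon/2$ bookkeeping is just a more explicit version of the slack the paper uses implicitly. Where you genuinely diverge is the inclusion $\mathcal{R}\subseteq\bigcap_{A}(A\cup A^{*})$. The paper's route is much lighter than yours: since $x\notin A$ and $A$ is compact, Theorem~\ref{thm:block} can be applied with a neighborhood $\mathcal{N}$ of $A$ that excludes $x$, giving an attractor block $Q$ with $x\notin Q$; then exactly your chain-trapping observation finishes the job, because for $\epsilon$ smaller than the gap between $\mathcal{F}(Q)$ and $\partial Q$, and small enough that the first $K$ steps of the chain track the true orbit into $Q^{\circ}$, any $\epsilon$-chain from $x$ enters $Q$ and never leaves it, hence cannot return to $x\notin Q$. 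The ``small-$\varepsilon$ loop'' you worry about is therefore already excluded by the single block in your hand, provided you choose it to miss $x$ --- a choice Theorem~\ref{thm:block} explicitly permits. Instead you reach for a full Lyapunov function $\ell$ built from a nested family of blocks; this does work (it is the Conley/McGehee--Wiandt fundamental-theorem machinery, and your remark that a block constrains all $f\in\mathcal{F}$ simultaneously is exactly why it survives the passage to an IFS), but it is considerably more than is needed, and the construction of a continuous $\ell$ with $\ell(f(y))<\ell(y)$ for every $f$ and every $y\in B\setminus A$ is real work that you defer to ``the standard construction'' --- it is the one step of your argument not actually carried out. Either write out or precisely cite that construction, or, more economically, replace the Lyapunov paragraph by the choose-$Q$-to-avoid-$x$ argument, which makes your proof complete and coincides with the paper's.
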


\begin{proof}
First assume that $x \notin\bigcap\limits_{A\in\mathcal{U}} ( A \cup A^{*})$.
Then there is a Conley attractor $A$ such that $x \notin A \cup A^{*}$. By
Theorem~\ref{thm:ARpair} the point $x$ lies in the basin of $A$. According to
Theorem~\ref{thm:block} there is a closed attractor block $Q$ for $A$ with
respect to ${\mathcal{F}}$ such that $x\notin Q$. Since $x$ lies in the basin
of $A$, there is an integer $K$ such that ${\mathcal{F}} ^{k}(x) \subset
Q^{o}$ for all $k \geq K$. Since ${\mathcal{F}}(Q) \subset Q^{o}$,
\[
d := \min\, \{ d(y,y^{\prime}) \, : y\in{\mathcal{F}}(Q),\, y^{\prime}%
\in\partial Q\} > 0.
\]

We must show that $x \notin R$. By way of contradiction, assume that $x$ is
chain-recurrent and that $\{x_{i}\}_{i=0}^{n}$ an $\epsilon$-chain with $x_{0}
= x_{n} = x$ and $\epsilon< d$. We may assume, by repeating the chain if
necessary, that $n \geq K$. Since ${\mathcal{F}} ^{k}(x) \subset Q^{o}$ for
all $k\geq K$ and by the continuity of the functions in ${\mathcal{F}}$, if
$\epsilon$ is sufficiently small, say $\epsilon= \epsilon_{0}$, then $x_{K}
\in Q $. Now $d(x_{K+1}, f(x_{K})) <\epsilon_{0} <d$ for some $f\in
{\mathcal{F}}$. Therefore $f(x_{K}) \in Q^{o}$ implies that $x_{K+1} \in Q$.
Repeating this argument shows that $x_{i} \in Q$ for all $i \geq K$. From the
first paragraph in the proof $x\notin Q$ and by the above $x = x_{n} \in Q$, a contradiction.

Conversely assume that $x \notin\mathcal{R}$. Then there is an $\epsilon>0$
such that no $\epsilon$-chain starts and ends at $x$. Let $U$ denote the set
of all points $y$ such that there is an $\epsilon$-chain from $x$ to $y$.
Notice that (1) $x\notin U$, (2) $U$ is an open set, and (3) $\mathcal{F}%
({\overline U}) \subset U$. Therefore $A :=\lim_{k \rightarrow\infty
}{\mathcal{F}}^{k}({\overline U})$ is a Conley attractor with $x\notin A$.
Since ${\mathcal{F}}(x)\subset U$ and $A :=\lim_{k \rightarrow\infty
}{\mathcal{F}}^{k}({\overline U})$, the point $x$ lies in the basin of
$\mathcal{F}$, and therefore $x\notin A^{*}$ by Theorem~\ref{thm:ARpair}. So
$x\notin A \cup A^{*}$.
\end{proof}

\section{Examples}

\label{sec:examples}

\begin{example}
\label{ex:multiple} This is an example of an IFS with infinitely many Conley
attractors. Let $n$ be an integer and consider the IFS on ${\mathbb{R}}$
consisting of the single function
\[
f(x) =
\begin{cases}
x^{2} - 2nx + (n^{2}+n) & \; \text{if}\quad n \leq x < n+1, n\geq0\\
-x^{2} +2(n+1)x -(n^{2}+n) & \; \text{if}\quad n \leq x < n+1, n < 0.
\end{cases}
\]
For all integers $m,n \geq0$, the interval $[-m,n]$ is a Conley attractor with
basin $(-m-1,n+1)$.
\end{example}

\begin{example}
\label{ex:projNS} This example shows that the analougous result to
Theorem~\ref{thm:affine} fails for a projective IFS on the projective plane
$\mathbb{RP}^{2}$. Consider the IFS consisting of a single projective function
$f$ represented by the matrix
\[%
\begin{pmatrix}
2 & 0 & 0\\
0 & 1 & 0\\
0 & 0 & 1
\end{pmatrix}
.
\]
The line in $\mathbb{RP}^{2}$ corresponding to the $y,z-$plane in
$\mathbb{R}^{3}$ is a Conley attractor but it is not a strict attractor.
\end{example}

\begin{example}
\label{ex:ProjMob} $[\text{Contractive IFS}]$  Figure~\ref{Ex} shows the attractor-repeller pair of
a M\"obius IFS whose space is the Riemann sphere. Since this IFS is
contractive it has a unique non-trivial strict attractor-repeller pair. 
 For further details on such  M\"obius examples see \cite{V}.

\begin{figure}[htb] \label{fig:1}
\begin{center}
\vskip -5mm
\includegraphics[height=4in, width=3.6in] {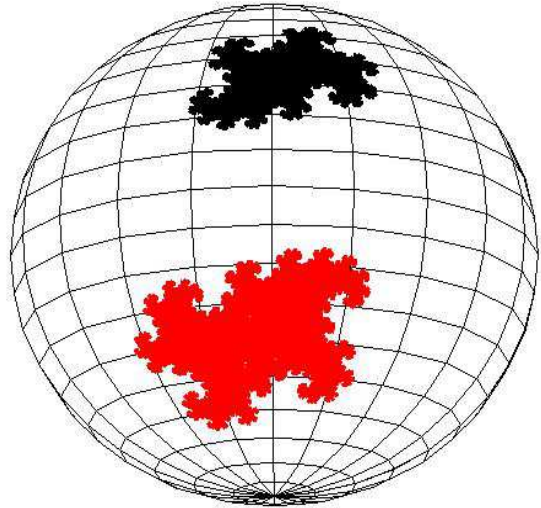}
\vskip -10mm
\caption{The attractor (red) and repeller (black) of a M\" obius IFS consisting of two M\"obius transformations.}
\label{Ex}
\end{center}
\end{figure}

\end{example}

\begin{example}
\label{ex:nonC} $[\text{Non-Contractive IFS}]$ A non-contractive IFS may have
no non-trivial Conley attractor. For example, the IFS on the unit circle
centered at the origin of the complex plane consisting of the single function
$f(z) = i z$ clearly has no Conley attractor. {\medskip}

Example~\ref{ex:multiple} is an non-contractive IFS with infinitely many
Conley attractors.{\medskip}

The following projective IFS, whose space is the projective plane, is
non-contractive but has a unique non-trivial strict attractor $A$ shown in
Figure~\ref{special}. The projective plane depicted as a disk with antipodal
points identified. This IFS consists of two functions given in matrix form by
\[
f_{1}=
\begin{pmatrix}
41 & -19 & 19\\
-19 & 41 & 19\\
19 & 19 & 41
\end{pmatrix}
\qquad\text{and}\qquad f_{2}=
\begin{pmatrix}
-10 & -1 & 19\\
-10 & 21 & 1\\
10 & 10 & 10
\end{pmatrix}
,
\]
The attractor $A$ is the union of the points in the red and green lines. In
the right panel a zoom is shown which displays the fractal structure of the
set of lines that comprise the attractor. The color red is used to indicate
the image of the attractor under $f_{1}$, while green indicates its image
under $f_{2}$. For further details on such examples, see \cite{BV}.{\medskip}

As indicated by the proof of Theorem~\ref{thm:affine}, an example of a
non-contractive affine or M\"obius IFS with a strict attractor cannot exist.

\begin{figure}[htb] \label{special}
\vskip 6mm
\centering
\includegraphics[natheight=13.652900in,natwidth=27.466900in,
height=2.0772in,
width=4.1486in]
{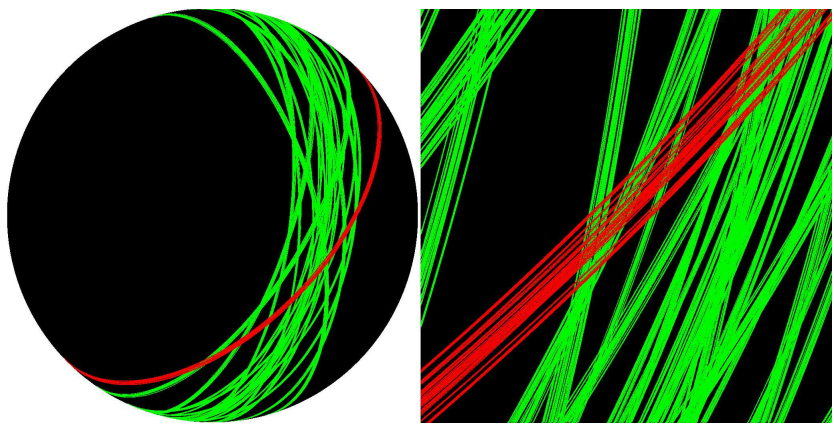}
\vskip 6mm
\caption{Projective attractor and a zoom.}
\end{figure}

\end{example}

\end{document}